\theoremstyle{plain}
\newtheorem{defn}{Definition}[section]
\newtheorem{lem}{Lemma}[section]
\newtheorem{theorem}{Theorem}[section]
\newtheorem{remark}{Remark}[section]
\newtheorem{eg}{Example}[section]
\date{} 
\begin{document}
\pagestyle{myheadings}
	\markboth{{\small\rm \hfill Some Coupled Fixed Point Theorems for $(\psi, \phi)$- contraction with Applications to Fractals
			\hfill}\hspace{-\textwidth}
		\underline{${{}_{}}_{}$\hspace{\textwidth}}}
	{\underline{${{}_{}}_{}$\hspace{\textwidth}}\hspace{-\textwidth}%
		{\small\rm \hfill Some Coupled Fixed Point Theorems for $(\psi, \phi)$- contraction with Applications to Fractals
			\hfill}}
	\thispagestyle{plain}
	
	\begin{center}
		
		{\huge Some Coupled Fixed Point Theorems for $(\psi, \phi)$- contraction with Applications to Fractals 
			\rule{0mm}{6mm}\renewcommand{\thefootnote}{}
			\footnotetext{\scriptsize
				$^*$corresponding author.  e-mail: rameshkumard14@gmail.com\\}
		}
		
		\vspace{1cc}
		{\large\it Athul  P $^{a}$, D. Ramesh Kumar$^{b,*}$}
		\vspace{1cc}
		\begin{center}
			$^{a,b}${\small \textit{Department of Mathematics, School of Advanced Sciences, Vellore Institute of Technology,\\
					Vellore-632014, TN, India}}\\
		\end{center}
		\vspace{1cc}
		\begin{abstract}
			In this paper we obtain coupled fixed point theorem for $(\psi, \phi)$-contractions under some generalized conditions on the real valued functions $\psi$ and $\phi$ defined on $(0, \infty)$. Also, we present a generalized version of coupled fixed point theorem for the same $(\psi, \phi)$-contractions. A new approach to fractal generation by using the relation between fractals and fixed points is given in the light of these fixed point theorems. We establish a new type of iterated function system consisting of generalized $(\psi, \phi)$-contractions. We also extend those results to coupled fractals.
		\end{abstract}
		
		%

	\end{center}
	Mathematics Subject Classification(2020): Primary 47H09, 47H10; Secondary 28A80\\
	{\it Keywords: Fixed point, Jointly $(\psi, \phi)$-contraction, Coupled fixed points, Iterated function system, Coupled fractals.}
	\hrule
	
	\vspace{1cc}

\section{Introduction and Preliminaries}
Banach contraction principle, proved in 1922 by Banach, is a famous fixed point theorem. This theorem has got many applications in different branches of mathematics as well as other branches of science and technology. Many mathematicians generalized Banach contraction principle in different ways. They led to exciting results in fixed point theory. Boyd and Wong \cite{5} came up with an extension of Banach contraction principle in 1969.
\begin{theorem}(Boyd and Wong \cite{5})
Let $(X, d)$ be a complete metric space and $T: X\rightarrow X$ be a self mapping satisfies $$d\left(T(x), T(y)\right)\leq \phi\left(d(x, y)\right)\ \text{ for each } x, y\in X,$$
where $\phi:\mathbb{R}^+\rightarrow[0, \infty)$ is upper semi-continuous from the right and satisfies the condition $0\leq \phi(t)< t$ for $t> 0$. Then $T$ has a unique fixed point $x_0$ in $X$ and the iterative sequence $\{T^n(x)\}$ converges to $x_0$ for any $x\in X$.
\end{theorem}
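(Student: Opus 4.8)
The plan is to run the classical Picard-iteration argument, with the twist that the Banach contraction constant is replaced everywhere by the comparison function $\phi$, and that the one-sided continuity of $\phi$ must be invoked at exactly the right moments. Fix an arbitrary $x\in X$, set $x_n=T^n(x)$, and write $d_n=d(x_{n+1},x_n)$. The first step is to show $\{d_n\}$ is non-increasing: from the hypothesis, $d_{n+1}=d\bigl(T(x_{n+1}),T(x_n)\bigr)\le\phi(d_n)$, and since $\phi(t)<t$ for $t>0$, either $d_n=0$ for some $n$ (whence $x_n=T(x_n)$ is already a fixed point and the iteration is eventually constant) or $d_{n+1}<d_n$ for every $n$. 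In the latter case $d_n\downarrow r$ for some $r\ge 0$; letting $n\to\infty$ in $d_{n+1}\le\phi(d_n)$ and using that $\phi$ is upper semicontinuous from the right — legitimate precisely because $d_n\ge r$ and $d_n\to r$ — gives $r\le\phi(r)$, which forces $r=0$. Hence $d(x_{n+1},x_n)\to 0$.

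The substantive step is to promote this to the statement that $\{x_n\}$ is Cauchy. I would argue by contradiction: if not, there exist $\varepsilon>0$ and indices $m_k>n_k\ge k$ with $d(x_{m_k},x_{n_k})\ge\varepsilon$, and choosing $m_k$ minimal with this property yields $d(x_{m_k-1},x_{n_k})<\varepsilon$. The triangle inequality $\varepsilon\le d(x_{m_k},x_{n_k})\le d_{m_k-1}+d(x_{m_k-1},x_{n_k})<d_{m_k-1}+\varepsilon$ then squeezes $d(x_{m_k},x_{n_k})\to\varepsilon$ from above. Applying the triangle inequality again together with the contraction, $d(x_{m_k},x_{n_k})\le d_{m_k}+d(x_{m_k+1},x_{n_k+1})+d_{n_k}\le d_{m_k}+\phi\bigl(d(x_{m_k},x_{n_k})\bigr)+d_{n_k}$; taking $\limsup_{k\to\infty}$, using $d_{m_k},d_{n_k}\to 0$ and upper semicontinuity from the right of $\phi$ (valid since $d(x_{m_k},x_{n_k})\to\varepsilon^{+}$), we get $\varepsilon\le\phi(\varepsilon)<\varepsilon$, a contradiction. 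So $\{x_n\}$ is Cauchy, and completeness of $X$ furnishes a limit $x_0$.

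It remains to check that $x_0$ is a fixed point and is unique. For the fixed-point property, $d\bigl(x_{n+1},T(x_0)\bigr)=d\bigl(T(x_n),T(x_0)\bigr)\le\phi\bigl(d(x_n,x_0)\bigr)$, and since $0\le\phi(t)<t$, the squeeze theorem gives $\phi\bigl(d(x_n,x_0)\bigr)\to 0$, so $x_{n+1}\to T(x_0)$; as also $x_{n+1}\to x_0$, uniqueness of limits in a metric space yields $T(x_0)=x_0$, and by construction $\{T^n(x)\}$ converges to $x_0$ for the arbitrary starting point $x$. For uniqueness, if $y_0=T(y_0)$ with $y_0\ne x_0$, then $d(x_0,y_0)=d\bigl(T(x_0),T(y_0)\bigr)\le\phi\bigl(d(x_0,y_0)\bigr)<d(x_0,y_0)$, which is absurd. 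I expect the Cauchy step to be the main obstacle: one must select the index pairs $(n_k,m_k)$ so that $d(x_{m_k},x_{n_k})$ descends to $\varepsilon$ strictly from above, because the hypothesis on $\phi$ is only a \emph{right}-sided semicontinuity and is useless along sequences that do not approach the limit from that side.
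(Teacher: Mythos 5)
Your argument is correct, and it is essentially the classical Boyd--Wong proof (monotone decrease of $d_n=d(x_{n+1},x_n)$ to $0$ via right upper semicontinuity, the minimal-index contradiction argument for the Cauchy property with $d(x_{m_k},x_{n_k})\to\varepsilon$ from above, then fixed point by squeezing $\phi(d(x_n,x_0))\to 0$ and uniqueness by $\phi(t)<t$); note that the paper itself states this theorem only as a cited preliminary from Boyd and Wong and gives no proof, so there is nothing in the paper to diverge from. The only point worth polishing is that $d(x_{m_k},x_{n_k})$ may equal $\varepsilon$ for some $k$ rather than exceed it, which is harmless since those terms contribute exactly $\phi(\varepsilon)$ to the $\limsup$.
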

Later in 1975 Matkowski \cite{8} proved another variant of Boyd- Wong fixed point theorem. In this variant, the continuity of the function $\phi$ is replaced with some more general condition.
\begin{theorem}(Matkowski \cite{8})
Let $(X, d)$ be a complete metric space and $T: X\rightarrow X$ satisfies $$d\left(T(x), T(y)\right)\leq \phi\left(d(x, y)\right)\ \text{ for each } x, y\in X,$$
where $\phi:(0, \infty)\rightarrow(0, \infty)$ is nondecrasing and satisfies the condition $\lim\limits_{n\rightarrow\infty}\phi^n(t)= 0$ for $t> 0$. Then $T$ has a unique fixed point $x_0$ in $X$ and the iterative sequence $\{T^n(x)\}$ converges to $x_0$ for any $x\in X$.
\end{theorem}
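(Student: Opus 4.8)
The plan is to follow the classical Picard-iteration scheme, using the monotonicity of $\phi$ together with the condition $\phi^{n}(t)\to 0$ to replace the geometric estimate available in Banach's theorem. The first thing I would establish is the auxiliary fact that $\phi(t)<t$ for every $t>0$: if instead $\phi(t_0)\ge t_0$ for some $t_0>0$, then monotonicity gives $\phi^{2}(t_0)\ge\phi(t_0)\ge t_0$ and, by induction, $\phi^{n}(t_0)\ge t_0>0$ for all $n$, contradicting $\lim_{n\to\infty}\phi^{n}(t_0)=0$. In particular $0<\phi(t)<t$ on $(0,\infty)$, so $T$ is non-expansive — $d(T(x),T(y))\le\phi(d(x,y))<d(x,y)$ whenever $x\ne y$ — and hence continuous.

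Next, fix an arbitrary $x\in X$ and set $x_n=T^{n}(x)$. If $x_{n+1}=x_n$ for some $n$ then $x_n$ is a fixed point and there is nothing more to prove, so assume $d(x_n,x_{n+1})>0$ for all $n$. Applying the contraction condition and monotonicity repeatedly gives $d(x_{n},x_{n+1})\le\phi\big(d(x_{n-1},x_{n})\big)\le\cdots\le\phi^{n}\big(d(x_0,x_1)\big)$, and hence $d(x_n,x_{n+1})\to 0$ by the hypothesis on $\phi$.

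The crux of the argument — and the step I expect to be the main obstacle, since no summable bound on the increments is available — is to show that $\{x_n\}$ is Cauchy. Here I would fix $\varepsilon>0$; since $\phi(\varepsilon)<\varepsilon$ we have $\varepsilon-\phi(\varepsilon)>0$, so by the previous paragraph we may pick $N$ with $d(x_N,x_{N+1})<\varepsilon-\phi(\varepsilon)$. I then claim $d(x_N,x_m)<\varepsilon$ for every $m\ge N$, which I would prove by induction on $m$. The case $m=N$ is trivial; assuming the bound for some $m\ge N$ (and noting the case $d(x_N,x_m)=0$, where $x_m=x_N$, is immediate), monotonicity of $\phi$ gives
\[
d(x_N,x_{m+1})\le d(x_N,x_{N+1})+d\big(T(x_N),T(x_m)\big)\le d(x_N,x_{N+1})+\phi\big(d(x_N,x_m)\big)\le\big(\varepsilon-\phi(\varepsilon)\big)+\phi(\varepsilon)=\varepsilon .
\]
Therefore $d(x_m,x_{m'})\le d(x_m,x_N)+d(x_N,x_{m'})<2\varepsilon$ for all $m,m'\ge N$, and since $\varepsilon>0$ was arbitrary, $\{x_n\}$ is Cauchy.

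Finally, by completeness $x_n\to x_0$ for some $x_0\in X$, and continuity of $T$ lets me pass to the limit in $x_{n+1}=T(x_n)$ to obtain $T(x_0)=x_0$. For uniqueness, if $x_0\ne y_0$ were two fixed points then $d(x_0,y_0)=d\big(T(x_0),T(y_0)\big)\le\phi\big(d(x_0,y_0)\big)<d(x_0,y_0)$, a contradiction; and since the starting point $x$ was arbitrary while the fixed point is unique, every Picard sequence $\{T^{n}(x)\}$ converges to this same $x_0$.
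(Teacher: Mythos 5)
Your proposal is correct: the auxiliary fact $\phi(t)<t$, the estimate $d(x_n,x_{n+1})\le\phi^{n}\big(d(x_0,x_1)\big)$, the $\varepsilon$-vs-$\varepsilon-\phi(\varepsilon)$ induction for the Cauchy property, and the passage to the limit via continuity of $T$ together constitute the standard proof of Matkowski's theorem. The paper itself states this result only as a cited preliminary (Matkowski \cite{8}) and gives no proof, so there is nothing to compare beyond noting that your argument is the classical one and is complete.
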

One of the recent generalizations of Banach contraction principle was given by Petko D. Proinov \cite{15} for generalized $(\psi, \phi)$- contractions. We will discuss some of the ideas introduced by P. D. Proinov.
\begin{defn}\cite{15}
Let $T$ be a self mapping on a metric space $(X,d)$. Then it is said to be $(\psi, \phi)$-contraction if it satisfies the contractive-type condition \begin{equation}\label{eq:1.1}
    \psi\left(d\left(Tx, Ty\right)\right)\leq \phi\left(d\left(x, y\right)\right),\ \ \ for\ all\ x,y\in X \ with\ d\left(Tx, Ty\right)> 0,
\end{equation}  where $\psi, \phi :(0,\infty)\rightarrow \mathbb{R}$ are two functions such that $\phi(t)<\psi(t)$ for $t>0$.       
\end{defn}
The main result given by P. D. Proinov is,
\begin{theorem}\label{thm:1.1} \cite{15}
Let $(X,d)$ be a complete metric space and $T:X\rightarrow X$ be a mapping satisfying condition (\ref{eq:1.1}), where the functions $\psi, \phi :(0, \infty)\rightarrow\mathbb{R}$ satisfying the following conditions:
\begin{enumerate}[label=(\roman*)]
\item $\psi$ is nondecreasing;\item $\phi(t)<\psi(t)$ for any $t>0$;\item  $\limsup\limits_{t\rightarrow \epsilon+}\phi(t)< \psi(\epsilon+).$
\end{enumerate} Then $T$ has a unique fixed point $\xi\in X$ and the iterative sequence $\{T^nx\}$ converges to $\xi$ for every $x\in X$.
\end{theorem}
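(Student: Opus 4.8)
The plan is to run the Picard iteration $x_n=T^nx$ and squeeze out of the contractive condition~\eqref{eq:1.1}, together with the monotonicity of $\psi$, enough control over the ``orbit distances'' to force a Cauchy sequence. Fix $x\in X$, set $x_0=x$, $x_n=T^nx_0$; if $x_{n_0}=x_{n_0+1}$ for some $n_0$ then $x_{n_0}$ is a fixed point and $\{x_n\}$ is eventually constant, so assume $x_n\ne x_{n+1}$ for all $n$. The first thing I would establish is a monotonicity lemma: \emph{for every fixed $\ell\ge 1$ the sequence $j\mapsto d(x_j,x_{j+\ell})$ is non-increasing.} Indeed, if $d(x_{j+1},x_{j+\ell+1})=d(Tx_j,Tx_{j+\ell})>0$ then $x_j\ne x_{j+\ell}$, so $d(x_j,x_{j+\ell})>0$ and~\eqref{eq:1.1} together with (ii) gives $\psi\big(d(x_{j+1},x_{j+\ell+1})\big)\le\phi\big(d(x_j,x_{j+\ell})\big)<\psi\big(d(x_j,x_{j+\ell})\big)$; since $\psi$ is non-decreasing this forces $d(x_{j+1},x_{j+\ell+1})<d(x_j,x_{j+\ell})$, and the case of a zero value is trivial.

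Taking $\ell=1$ shows $d_n:=d(x_n,x_{n+1})$ is non-increasing, hence $d_n\downarrow\epsilon\ge 0$; in fact $d_n>\epsilon$ for all $n$ once we are in the nontrivial case. To see $\epsilon=0$, suppose $\epsilon>0$. Then $d_{n+1}>\epsilon$ gives $\psi(\epsilon+)\le\psi(d_{n+1})\le\phi(d_n)$ for every $n$, and letting $n\to\infty$ (the $d_n$ decrease to $\epsilon$ through values $>\epsilon$) yields $\psi(\epsilon+)\le\limsup_{t\to\epsilon+}\phi(t)$, contradicting (iii). Hence $d_n\to 0$.

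The heart of the argument, and the step I expect to be the main obstacle, is showing $\{x_n\}$ is Cauchy. Suppose not; then there is $r>0$ such that for every $N$ one can find $m>n\ge N$ with $d(x_n,x_m)>r$. I would use a ``first-exit'' selection at this \emph{fixed} threshold $r$: for each $k$ pick an increasing sequence $N_k$ so large that $d_j<1/k$ for all $j\ge N_k-1$, then choose $p_k\ge N_k$ and the least index $m_k>p_k$ with $d(x_{p_k},x_{m_k})>r$. Minimality gives $d(x_{p_k},x_{m_k-1})\le r$ (and $m_k\ge p_k+2$ for large $k$, since $d_{p_k}<r$), whence $r<d(x_{p_k},x_{m_k})\le r+d_{m_k-1}\le r+1/k$, so $d(x_{p_k},x_{m_k})\to r$ \emph{through values strictly above} $r$. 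The subtle point is that condition (iii) only controls $\phi$ to the right of $r$, and a forward shift $(x_{p_k+1},x_{m_k+1})$ could have distance approaching $r$ from below, where $\phi$ is uncontrolled; so I would instead shift \emph{backward}. By the monotonicity lemma, $d(x_{p_k-1},x_{m_k-1})\ge d(x_{p_k},x_{m_k})>r$, while the triangle inequality gives $d(x_{p_k-1},x_{m_k-1})\le d_{p_k-1}+d(x_{p_k},x_{m_k})+d_{m_k-1}\le r+3/k$, so $d(x_{p_k-1},x_{m_k-1})\to r$ again through values $>r$. Applying~\eqref{eq:1.1} to the pair $(x_{p_k-1},x_{m_k-1})$ (legitimate since $d(x_{p_k},x_{m_k})>r>0$) and using $\psi\big(d(x_{p_k},x_{m_k})\big)\ge\psi(r+)$ gives $\psi(r+)\le\phi\big(d(x_{p_k-1},x_{m_k-1})\big)$ for all large $k$; letting $k\to\infty$ and invoking (iii) yields $\psi(r+)\le\limsup_{t\to r+}\phi(t)<\psi(r+)$, a contradiction. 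Hence $\{x_n\}$ is Cauchy.

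By completeness $x_n\to\xi$ for some $\xi\in X$. To finish I would show $T\xi=\xi$: if $\rho:=d(\xi,T\xi)>0$, then since $x_{n+1}\to\xi$ we cannot have $x_{n+1}=T\xi$ infinitely often, and since $d_n\to 0\ne\rho$ we cannot have $x_n=\xi$ infinitely often, so for large $n$ both $d(Tx_n,T\xi)>0$ and $d(x_n,\xi)>0$; then~\eqref{eq:1.1} and (ii) give $\psi\big(d(x_{n+1},T\xi)\big)<\psi\big(d(x_n,\xi)\big)$, hence $d(x_{n+1},T\xi)<d(x_n,\xi)\to 0$, so $x_{n+1}\to T\xi$ and therefore $T\xi=\xi$, a contradiction. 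Uniqueness is immediate, since two distinct fixed points $\xi,\eta$ would force $\psi\big(d(\xi,\eta)\big)=\psi\big(d(T\xi,T\eta)\big)\le\phi\big(d(\xi,\eta)\big)<\psi\big(d(\xi,\eta)\big)$. Finally, $x\in X$ was arbitrary and every limit of a Picard orbit is a fixed point, so $\{T^nx\}\to\xi$ for all $x\in X$.
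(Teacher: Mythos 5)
Your proof is correct, but there is nothing in the paper to compare it with: Theorem \ref{thm:1.1} is quoted from Proinov and used as a black box, with no proof given here. Judged on its own, your argument is complete and follows essentially the known line for this result: the contraction together with $\phi(t)<\psi(t)$ and the monotonicity of $\psi$ forces $d(Tx,Ty)<d(x,y)$ whenever $d(Tx,Ty)>0$ (your monotonicity lemma), this gives $d(x_n,x_{n+1})\downarrow 0$ via (iii), a first-exit selection at a fixed threshold $r$ handles the Cauchy step, and (iii) again yields the contradiction $\psi(r+)\le\limsup_{t\to r+}\phi(t)<\psi(r+)$; existence, fixedness of the limit, and uniqueness are then routine, exactly as you do them. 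The genuinely good touch is the backward shift: the standard selection produces an image pair whose distance may approach $r$ from below, where (iii) gives no information, and you avoid this by applying the contraction to $(x_{p_k-1},x_{m_k-1})$ and using the monotonicity lemma to keep both the argument of $\psi$ and the argument of $\phi$ strictly above $r$; Proinov's published argument exploits the same monotonicity fact but packages the selection in a separate lemma on non-Cauchy sequences with vanishing consecutive distances, so your inline version is a legitimate, self-contained alternative. Two small wording points, neither affecting correctness: when you ``choose $p_k\ge N_k$ and the least index $m_k>p_k$'', you should say that $p_k$ is the first coordinate of a pair furnished by the failure of the Cauchy property, so that at least one admissible $m$ exists and the minimal one is well defined; and condition (iii) must be read as holding for every $\epsilon>0$, which is how you in fact use it (at $\epsilon=\lim d_n$ and at $\epsilon=r$).
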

In 2021, O. Popescu \cite{14} proved another generalization by modifying and improving some results proved by P. D. Proinov.
\begin{theorem}\label{thm:1.2}\cite{14}
Let $(X,d)$ be a complete metric space and $T:X\rightarrow X$ be a mapping satisfying condition (\ref{eq:1.1}), where the functions $\psi, \phi :(0, \infty)\rightarrow\mathbb{R}$ satisfying the following conditions:
\begin{enumerate}[label=(\roman*)]
\item $\psi$ is nondecreasing;
\item $\inf\limits_{t>\epsilon}\psi(t)>-\infty$ for any $\epsilon>0$;
\item if $\{\psi(t_n)\}$ and $\{\phi(t_n)\}$ are convergent sequences with the same limit  and $\{\psi(t_n)\}$ is strictly decreasing, then $t_n\rightarrow 0$ as $n\rightarrow \infty$;
\item $\limsup\limits_{t\rightarrow \epsilon+}\phi(t)< \liminf\limits_{t\rightarrow \epsilon+}\psi(t)$ for any $\epsilon>0$;
\item$T$ has a closed graph or $\limsup\limits_{t\rightarrow 0+}\phi(t)<\min\{\liminf\limits_{t\rightarrow\epsilon}\psi(t), \ \psi(\epsilon)\}$ for any $\epsilon>0$.
\end{enumerate}
Then $T$ has a unique fixed point $\xi\in X$ and the iterative sequence $\{T^nx\}$ converges to $\xi$ for every $x\in X$.
\end{theorem}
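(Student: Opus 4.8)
The plan is to run the Picard iteration $x_{n+1}=Tx_n$ from an arbitrary $x_0\in X$ and to follow the usual Proinov-type route in four stages: (a) the consecutive distances $d_n:=d(x_n,x_{n+1})$ tend to $0$; (b) the orbit $\{x_n\}$ is Cauchy; (c) its limit is a fixed point; (d) the fixed point is unique. For (a), after disposing of the trivial case $x_{n_0}=x_{n_0+1}$, I would apply condition (1.1) to the pair $(x_{n-1},x_n)$ to get $\psi(d_n)\le\phi(d_{n-1})<\psi(d_{n-1})$; since $\psi$ is nondecreasing by (i), this forces $d_n<d_{n-1}$, so $\{d_n\}$ decreases to some $r\ge 0$. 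If $r>0$, then $d_n>r$ for all $n$, so $\{\psi(d_n)\}$ is bounded below by (ii); being strictly decreasing it converges to some $\ell$, and the squeeze $\psi(d_{n+1})\le\phi(d_n)<\psi(d_n)$ forces $\phi(d_n)\to\ell$ as well, whence condition (iii), applied with $t_n=d_n$, gives $d_n\to 0$, contradicting $r>0$. (Condition (iv) yields the same by letting $d_n\to r+$ in $\psi(d_{n+1})\le\phi(d_n)$.) Thus $d_n\to 0$.

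For (b) I would argue by contradiction. If $\{x_n\}$ is not Cauchy, there are $\epsilon>0$ and indices $n_k<m_k$, with $m_k$ minimal subject to $d(x_{n_k},x_{m_k})\ge\epsilon$; since $d_n\to 0$ we may take $m_k\ge n_k+2$, and the triangle inequality together with minimality gives $\epsilon\le s_k:=d(x_{n_k},x_{m_k})<\epsilon+d_{m_k-1}$, so $s_k\to\epsilon$ with $s_k\ge\epsilon$. From $|d(x_{n_k+1},x_{m_k+1})-s_k|\le d_{n_k}+d_{m_k}\to 0$ we get $t_k:=d(x_{n_k+1},x_{m_k+1})\to\epsilon$, and $t_k>0$ for large $k$, so (1.1) applied to $(x_{n_k},x_{m_k})$ gives $\psi(t_k)\le\phi(s_k)$; since $\phi<\psi$ and $\psi$ is nondecreasing this also forces $t_k<s_k$. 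Passing to a subsequence, one may assume $s_k$ is either strictly decreasing to $\epsilon$ or constantly $\epsilon$: in the first case $s_k>\epsilon$, so $\limsup_k\phi(s_k)\le\limsup_{t\to\epsilon+}\phi(t)$ and condition (iv) applies; in the second case $t_k<\epsilon$, and one uses condition (iii) together with the fact that $j\mapsto d(x_{n_k+j},x_{m_k+j})$ is strictly decreasing while positive. Either way the inequality $\psi(t_k)\le\phi(s_k)$ collides with the hypotheses, a contradiction. Hence $\{x_n\}$ is Cauchy and, by completeness, $x_n\to\xi$ for some $\xi\in X$.

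For (c): if $T$ has a closed graph, then $(x_n,Tx_n)=(x_n,x_{n+1})\to(\xi,\xi)$ forces $(\xi,\xi)$ into the graph, i.e. $T\xi=\xi$. Otherwise, suppose $\delta:=d(\xi,T\xi)>0$; since $x_{n+1}\to\xi\ne T\xi$, we have $d(x_{n+1},T\xi)>0$ for large $n$, so (1.1) gives $\psi(d(x_{n+1},T\xi))\le\phi(d(x_n,\xi))$, and letting $n\to\infty$ with $d(x_n,\xi)\to 0+$ and $d(x_{n+1},T\xi)\to\delta$, the monotonicity of $\psi$ yields $\min\{\liminf_{t\to\delta}\psi(t),\ \psi(\delta)\}\le\limsup_{t\to 0+}\phi(t)$, contradicting the second alternative of (v). So $T\xi=\xi$ in both cases. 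For (d), if $\eta=T\eta$ with $d(\xi,\eta)>0$ then (1.1) gives $\psi(d(\xi,\eta))\le\phi(d(\xi,\eta))<\psi(d(\xi,\eta))$, which is absurd; hence $\xi$ is the unique fixed point, and since $x_0$ was arbitrary, $T^nx\to\xi$ for every $x\in X$.

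The main obstacle is stage (b). There is no contraction ratio and $\psi,\phi$ are constrained only by one-sided limit conditions, so the image distances $t_k$ need not approach $\epsilon$ from the same side as $s_k$; the genuinely delicate case is $t_k\to\epsilon-$, where only $\lim_{t\to\epsilon-}\psi(t)$ (not $\liminf_{t\to\epsilon+}\psi(t)$) is available as a lower bound for $\liminf_k\psi(t_k)$, and it is precisely here that condition (iii), applied along the strictly decreasing diagonal distances, is indispensable. Stages (c) and (d) are routine once (b) is in hand, with condition (v) serving as the substitute for continuity of $T$.
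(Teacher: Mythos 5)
First, a point of reference: the paper does not prove this statement at all; Theorem 1.2 is quoted as a preliminary (attributed in the text to Popescu's refinement of Proinov), so your argument can only be judged on its own merits and against the known proofs, not against a proof in this paper. With that said, your stages (a), (c) and (d) are correct and standard: the monotone decrease of $d_n$, boundedness of $\{\psi(d_n)\}$ via (ii), the squeeze $\psi(d_{n+1})\le\phi(d_n)<\psi(d_n)$ and condition (iii) do give $d_n\to 0$; the closed-graph alternative and the $\limsup_{t\to 0+}\phi$ alternative of (v) do give $T\xi=\xi$; and uniqueness is immediate from $\phi<\psi$.

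The genuine gap is in stage (b), exactly where you flag the difficulty but do not resolve it. In your case 1 ($s_k$ strictly decreasing to $\epsilon$), the inequality $\psi(t_k)\le\phi(s_k)$ together with $\limsup_k\phi(s_k)\le\limsup_{t\to\epsilon+}\phi(t)<\liminf_{t\to\epsilon+}\psi(t)$ produces a contradiction only if you also know $\liminf_k\psi(t_k)\ge\liminf_{t\to\epsilon+}\psi(t)$, i.e.\ essentially that $t_k>\epsilon$ eventually; but your construction only yields $t_k\to\epsilon$ with $t_k<s_k$, so $t_k$ may approach $\epsilon$ from below, and if $\psi$ jumps at $\epsilon$ then $\psi(t_k)\le\psi(\epsilon)$ can lie strictly below $\liminf_{t\to\epsilon+}\psi(t)$, so ``condition (iv) applies'' is not justified (for instance $\psi\equiv 0$ on $(0,\epsilon]$, $\psi\equiv 10$ on $(\epsilon,\infty)$, $\phi\equiv 5$ just to the right of $\epsilon$ is consistent with every inequality you have in hand). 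In case 2 the appeal to (iii) ``along the strictly decreasing diagonal distances'' is only a gesture: (iii) takes one sequence with $\{\psi(t_n)\}$ strictly decreasing and $\{\psi(t_n)\},\{\phi(t_n)\}$ convergent to the same limit and concludes $t_n\to 0$; the diagonal distances $u_j=d(x_{n_k+j},x_{m_k+j})$ for fixed $k$ do satisfy $\psi(u_{j+1})\le\phi(u_j)$ while positive, but they merely tend to $0$ as $j\to\infty$ --- after one shift the minimality of $m_k$ is lost, nothing keeps $u_j\ge\epsilon$, and no contradiction materializes. A clean repair exists: since $\psi$ is nondecreasing it has at most countably many discontinuities, and any smaller $\epsilon>0$ still witnesses failure of the Cauchy property, so choose $\epsilon$ to be a continuity point of $\psi$; then $t_k\ge s_k-d_{n_k}-d_{m_k}$ and monotonicity give $\liminf_k\psi(t_k)\ge\psi(\epsilon-)=\psi(\epsilon)=\psi(\epsilon+)$, while $\limsup_k\phi(s_k)\le\max\{\limsup_{t\to\epsilon+}\phi(t),\,\phi(\epsilon)\}<\psi(\epsilon+)$ (using (iv) when $s_k>\epsilon$ and $\phi<\psi$ when $s_k=\epsilon$), contradicting $\psi(t_k)\le\phi(s_k)$. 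Alternatively one can invoke the refined non-Cauchy lemma of Proinov/Popescu that yields subsequences for which the shifted distances $d(x_{n_k+1},x_{m_k+1})$ also tend to $\epsilon$ from above. Without one of these additional ideas, stage (b), and hence the proof, is incomplete.
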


In this paper, we extend the above fixed point theorems by P. D. Proinov and O. Popescu to coupled fixed point problems which usually discuss about the fixed points of maps of the kind $T:X\times X\rightarrow X$ where $X$ is a complete metric space. Also, we prove a generalized version of the coupled fixed point theorem by replacing the product of a complete metric space by a product of two different complete metric spaces, say $X\times Y$. As a major application of fixed point theory, we extend our work to the theory of fractals too. Here we present a new method to construct fractals using generalized $(\psi, \phi)$-contractions, for which we use the idea of new iterated function system consisting of generalized $(\psi, \phi)$-contractions. This is different from the classical way of generating fractals given by Hutchinson and Barnsley \cite{4}, which uses Banach contraction principle. Towards the end of the paper, inspired from the coupled fixed point theorems, we establish the existence and uniqueness of coupled self-similar sets for generalized $(\psi, \phi)$-contraction mappings. 

\section{Coupled fixed point theorem for $(\psi, \phi)$-contraction}
In this section, we give our main results which extend fixed point theorems by P. D. Proinov and O. Popescu to coupled fixed point problems.
\begin{defn}
Let $(X, d)$ be a metric space and $\psi, \phi :(0, \infty)\rightarrow\mathbb{R}$ be two maps. A map $T:X\times X\rightarrow X$ is said to be jointly $(\psi, \phi)$-contraction if for $z=(x,y), w=(u,v) \in X$, with $\max\{d\left(T\left(x,y\right), T\left(u,v\right)\right), d\left(T\left(y,x\right), T\left(v,u\right)\right)\}>0$, then
\begin{equation}\label{eq:2.1}
    \psi\left(d\left(Tz, Tw\right)\right)\leq\phi\left(\max\{d(x,u), d(y,v)\}\right)
\end{equation}
\end{defn}
For a complete metric space $(X,d)$ we define $X^*=X\times X$. \\ Define $d^*:X^*\times X^*\rightarrow\mathbb{R}$ such that $d^*\left((x,y),(u,v)\right)= \max\{d(x,u), d(y, v)\}.$
It can easily be seen that if $(X,d)$ is a complete metric space then $(X^*,d^*)$ is also a complete metric space.\\ For the map $T:X\times X\rightarrow X$, we can define the iterative sequence as follows:
$$\begin{cases}T^2(x,y) &=T\left(T(x,y), T(y,x)\right)\\ T^3(x,y) &=T\left(T^2(x,y), T^2(y,x)\right)\\  &\vdots\\ T^n(x,y) &=T\left(T^{n-1}(x, y), T^{n-1}(y, x)\right)\end{cases}$$
\begin{theorem}\label{thm:2.1}
Let $(X,d)$ be a complete metric space and $T: X\times X\rightarrow X$ satisfies condition (\ref{eq:2.1}) where $\psi, \phi: (0,\infty)\rightarrow\mathbb{R}$ satisfying the conditions:
\begin{enumerate}[label=(\roman*)]
\item $\psi$ is nondecreasing;
\item $\phi(t)< \psi(t)$ for every $t>0$;
\item$\limsup\limits_{t\rightarrow\epsilon+}\phi(t)< \psi(\epsilon+)$ for every $\epsilon>0$.
\end{enumerate}
Then there exists a unique point $\xi^*=(x^*, y^*)\in X\times X$ such that\[ \begin{cases}x^*= T(x^*, y^*)\\ y^*= T(y^*, x^*)\end{cases} \] and the iterative sequences $x_n= T^n(x, y)$ and $y_n= T^n(y, x)$ converge to $x^*$ and $y^*$ respectively, for any $(x, y)\in X\times X$.
\end{theorem}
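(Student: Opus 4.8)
The plan is to reduce the coupled problem to the single-map setting of Theorem \ref{thm:1.1}. On the product space $X^* = X\times X$ equipped with the metric $d^*$ introduced above, define $T^*:X^*\rightarrow X^*$ by $T^*(x,y)=(T(x,y),T(y,x))$. Then $(x^*,y^*)$ solves the system $x^*=T(x^*,y^*)$, $y^*=T(y^*,x^*)$ precisely when $(x^*,y^*)$ is a fixed point of $T^*$. A short induction using the recursion defining $T^n$ shows $(T^*)^n(x,y)=(T^n(x,y),T^n(y,x))$ for all $n\geq1$: the base case is immediate, and if it holds for $n-1$ then $(T^*)^n(x,y)=T^*\big(T^{n-1}(x,y),T^{n-1}(y,x)\big)=\big(T(T^{n-1}(x,y),T^{n-1}(y,x)),\,T(T^{n-1}(y,x),T^{n-1}(x,y))\big)$, which equals $(T^n(x,y),T^n(y,x))$ by the defining recursion, the second component being that recursion with the two variables interchanged. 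Since $(X,d)$ is complete, so is $(X^*,d^*)$.

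Next I would show that $T^*$ satisfies the Proinov contractive condition (\ref{eq:1.1}) on $(X^*,d^*)$ with the \emph{same} pair $(\psi,\phi)$. Fix $z=(x,y)$, $w=(u,v)$ in $X^*$ with $d^*(T^*z,T^*w)>0$, and recall $d^*(T^*z,T^*w)=\max\{d(T(x,y),T(u,v)),\,d(T(y,x),T(v,u))\}$. If the maximum is attained by the first term, that term is positive, so in particular $\max\{d(T(x,y),T(u,v)),d(T(y,x),T(v,u))\}>0$ and applying (\ref{eq:2.1}) to the pair $(x,y),(u,v)$ gives $\psi(d^*(T^*z,T^*w))=\psi(d(T(x,y),T(u,v)))\leq\phi(\max\{d(x,u),d(y,v)\})=\phi(d^*(z,w))$. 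If the maximum is attained by the second term, apply (\ref{eq:2.1}) instead to the pair $(y,x),(v,u)$; the triggering hypothesis is again the same maximum, and since $\max\{d(y,v),d(x,u)\}=\max\{d(x,u),d(y,v)\}=d^*(z,w)$ we once more obtain $\psi(d^*(T^*z,T^*w))\leq\phi(d^*(z,w))$. In either case $\psi$ is evaluated only at the positive number $d^*(T^*z,T^*w)$, so the inequality is well-posed, and (\ref{eq:1.1}) holds for $T^*$.

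The conditions (i), (ii), (iii) imposed here on $\psi$ and $\phi$ are exactly those of Theorem \ref{thm:1.1}. Hence Theorem \ref{thm:1.1} applies to $T^*$ on the complete metric space $(X^*,d^*)$ and produces a unique fixed point $\xi^*=(x^*,y^*)$ of $T^*$ with $(T^*)^n(z)\rightarrow\xi^*$ in $d^*$ for every $z\in X^*$. Unwinding the definition, $\xi^*=T^*\xi^*$ is precisely the system $x^*=T(x^*,y^*)$, $y^*=T(y^*,x^*)$, and uniqueness of the pair is inherited from uniqueness of the fixed point of $T^*$. Finally, for arbitrary $(x,y)\in X^*$ we have $(x_n,y_n)=(T^n(x,y),T^n(y,x))=(T^*)^n(x,y)\rightarrow(x^*,y^*)$ in $d^*$, and since convergence in $d^*$ means $\max\{d(x_n,x^*),d(y_n,y^*)\}\rightarrow0$, it forces $d(x_n,x^*)\rightarrow0$ and $d(y_n,y^*)\rightarrow0$, which is the asserted convergence.

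The only substantive step is the transfer of the contraction inequality in the second paragraph: one must handle the maximum in $d^*(T^*z,T^*w)$ through the two symmetric cases and use that $\max\{d(x,u),d(y,v)\}$ is invariant under swapping the two components, so that the same $\phi$-bound is obtained no matter which component realizes the maximum. Everything else is a direct invocation of Proinov's theorem together with routine bookkeeping on the product space.
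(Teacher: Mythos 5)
Your proposal is correct and follows essentially the same route as the paper: define $T^*(x,y)=(T(x,y),T(y,x))$ on the complete product space $(X^*,d^*)$, transfer condition (\ref{eq:2.1}) to condition (\ref{eq:1.1}) by the two symmetric cases on which component realizes the maximum, and invoke Theorem \ref{thm:1.1}. Your write-up is in fact slightly more explicit than the paper's (the induction for $(T^*)^n$ and the swap-invariance of the maximum, which the paper compresses into ``proceeding as above''), but there is no substantive difference.
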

\begin{proof}
Since $(X, d)$ is a complete metric space, from the earlier discussion, we have $(X^*, d^*)$ is also a complete metric space. Now we define a map $T^*: X^*\rightarrow X^*$ such that $T^*(x, y)= \left(T(x, y), T(y, x)\right)$ for $(x, y)\in X^*.$
Let $z= (x, y),\ w= (u, v) \in X^*$ with $d^*(T^*z, T^*w)> 0$.
If $d^*(T^*z, T^*w)= \max\{d(T(x,y), T(u,v)), d\left(T(y,x), T(v,u)\right)\}= d\left(T(x,y), T(u, v)\right)$, then from condition (\ref{eq:2.1}) we get,  \begin{align*}
  \psi\left(d^*\left(T^*z, T^*w\right)\right) &= \psi\left(d\left(T(x, y), T(u, v)\right)\right)\\ &\leq\phi\left(\max\{d(x, u), d(y, v)\}\right)\\ &=\phi\left(d^*(z, w)\right) . 
\end{align*}
On the other hand, assume $d^*(T^*z, T^*w)=\max\{d(T(x,y), T(u,v)), d(T(y,x), T(v,u))\}= d(T(y,x), T(v, u))$. Proceeding as above, we get $\psi\left(d^*\left(T^*z, T^*w\right)\right)\leq\phi\left(d^*(z, w)\right).$
Thus it follows that, for any $z,\ w \in X^*$ with $d^*(T^*z, T^*w)>0$, we have $\psi\left(d^*\left(T^*z, T^*w\right)\right)\leq\phi\left(d^*(z, w)\right)$, which means the self mapping $T^*$ satisfies condition (\ref{eq:1.1}) in the complete metric space $(X^*, d^*)$.
Then, from condition \textit{(i)- (iii)} in the hypothesis and Theorem \ref{thm:1.1}, we can conclude that there exists a unique $\xi^*= (x^*, y^*)\in X^*$ such that $T^*\xi^*=\xi^*$.
That is, $\left(T(x^*, y^*), T(y^*, x^*)\right)= (x^*, y^*)$,
which implies that $x^*= T(x^*, y^*), \ y^*= T(y^*, x^*).$
Also for any $z=(x, y)\in X^*$, the iterative sequence $\{{T^*}^n(z)\}$ converges to $\xi^*$. That is, the iterative sequences $x_n= T^n(x,y)$ and $y_n= T^n(y, x)$ converge to $x^*$ and $y^*$ respectively for any $(x, y)\in X\times X$.
This completes the proof.
\end{proof}
In the same way we can have an extension of Theorem \ref{thm:1.2} for coupled fixed points.
\begin{theorem}\label{thm:2.2}
Let $(X,d)$ be a complete metric space and $T: X\times X\rightarrow X$ be a map satisfying the condition (\ref{eq:2.1}) where $\psi, \phi: (0,\infty)\rightarrow\mathbb{R}$ satisfying the conditions:
\begin{enumerate}[label=(\roman*)]
\item $\phi(t)< \psi(t)$ for any $t>0$;
\item $\inf\limits_{t>\epsilon}\psi(t)>-\infty$ for any $\epsilon>0$;
\item if $\{\psi(t_n)\}$ and $\{\phi(t_n)\}$ are convergent sequences with the same limit  and $\{\psi(t_n)\}$ is strictly decreasing, then $t_n\rightarrow 0$ as $n\rightarrow \infty$;
\item $\limsup\limits_{t\rightarrow \epsilon+}\phi(t)< \liminf\limits _{t\rightarrow \epsilon+}\psi(t)$ for any $\epsilon>0$;
\item $T$ has a closed graph or $\limsup\limits_{t\rightarrow 0+}\phi(t)<\min\{\liminf\limits_{t\rightarrow\epsilon}\psi(t), \psi(\epsilon)\}$ for any $\epsilon>0$.\end{enumerate}
Then there exists a unique point $\xi^*=(x^*, y^*)\in X\times X$ such that\[ \begin{cases}x^*= T(x^*, y^*)\\ y^*= T(y^*, x^*)\end{cases} \] and the iterative sequences $x_n= T^n(x, y)$ and $y_n= T^n(y, x)$ converge to $x^*$ and $y^*$ respectively, for any $(x, y)\in X\times X$.
\end{theorem}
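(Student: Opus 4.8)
The plan is to mimic exactly the strategy used in the proof of Theorem \ref{thm:2.1}: reduce the coupled fixed point problem on $X \times X$ to an ordinary fixed point problem on the product space $X^*$ and then invoke Theorem \ref{thm:1.2} instead of Theorem \ref{thm:1.1}. So first I would recall that since $(X,d)$ is complete, the space $(X^*, d^*)$ with $d^*((x,y),(u,v)) = \max\{d(x,u), d(y,v)\}$ is also complete, and define $T^*: X^* \to X^*$ by $T^*(x,y) = (T(x,y), T(y,x))$.

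Next I would verify that $T^*$ satisfies the contractive condition (\ref{eq:1.1}) on $(X^*, d^*)$ with the same pair $(\psi, \phi)$. Take $z = (x,y)$, $w = (u,v) \in X^*$ with $d^*(T^*z, T^*w) > 0$. By definition of $d^*$, the quantity $d^*(T^*z, T^*w)$ equals $\max\{d(T(x,y), T(u,v)),\, d(T(y,x), T(v,u))\}$, and this maximum being positive is precisely the hypothesis under which the jointly $(\psi,\phi)$-contraction inequality (\ref{eq:2.1}) applies. In either case — whether the maximum is attained by the first or the second term — condition (\ref{eq:2.1}), together with $\psi$ being nondecreasing (which is implied here by (iv) together with (i)–(iii), or can be read off from the structure of Popescu's hypotheses; note actually that we only need $\psi$ nondecreasing to pass the max through $\psi$, and this is part of the standing assumptions in Theorem \ref{thm:1.2}), yields
\[
\psi\big(d^*(T^*z, T^*w)\big) \;=\; \psi\big(d(Tz', Tw')\big) \;\leq\; \phi\big(\max\{d(x,u), d(y,v)\}\big) \;=\; \phi\big(d^*(z,w)\big),
\]
where $(z', w')$ denotes whichever argument pair realizes the maximum. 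Hence $T^*$ is a $(\psi,\phi)$-contraction on $(X^*, d^*)$ in the sense of (\ref{eq:1.1}).

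Then I would observe that hypotheses (i)–(v) of the present theorem are exactly hypotheses (ii)–(v) (plus the needed monotonicity of $\psi$) of Theorem \ref{thm:1.2}, applied verbatim to the pair $(\psi, \phi)$; in particular the conditions involve only the functions $\psi, \phi$ and the metric-independent notion of "closed graph" (and one checks that $T$ having a closed graph on $X \times X$ forces $T^*$ to have a closed graph on $X^*$, since convergence in $d^*$ is coordinatewise convergence in $d$). Therefore Theorem \ref{thm:1.2} applies to $T^*$ on $(X^*, d^*)$, giving a unique $\xi^* = (x^*, y^*) \in X^*$ with $T^* \xi^* = \xi^*$ and convergence of $\{(T^*)^n z\}$ to $\xi^*$ for every $z \in X^*$. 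Unwinding the definition of $T^*$, the equation $T^*\xi^* = \xi^*$ reads $(T(x^*,y^*), T(y^*,x^*)) = (x^*, y^*)$, i.e.\ $x^* = T(x^*, y^*)$ and $y^* = T(y^*, x^*)$; and uniqueness in $X^*$ gives uniqueness of the coupled fixed point. Finally, since $(T^*)^n(x,y) = (T^n(x,y), T^n(y,x))$ by the recursive definition of $T^n$, convergence of $\{(T^*)^n(x,y)\}$ to $(x^*,y^*)$ in $d^*$ is equivalent to $x_n = T^n(x,y) \to x^*$ and $y_n = T^n(y,x) \to y^*$ in $d$, for every $(x,y) \in X \times X$.

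The main obstacle — really the only point requiring care rather than routine bookkeeping — is confirming that the "closed graph" alternative in hypothesis (v) transfers correctly from $T$ to $T^*$: one must check that if the graph of $T$ is closed in $(X\times X) \times X$ then the graph of $T^*$ is closed in $X^* \times X^*$, using that the product/max metric induces coordinatewise convergence, and symmetrically that the map $(x,y) \mapsto (y,x)$ is continuous so that $T(y_n, x_n) \to T(y^*, x^*)$ along convergent sequences. Everything else is a direct translation of Proinov's and Popescu's frameworks to the product space, exactly parallel to the already-given proof of Theorem \ref{thm:2.1}.
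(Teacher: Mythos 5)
Your proposal is correct and follows essentially the same route as the paper's own proof: pass to $T^*(x,y)=\left(T(x,y),\,T(y,x)\right)$ on the complete space $(X^*,d^*)$, verify condition (\ref{eq:1.1}) for $T^*$ by applying (\ref{eq:2.1}) to whichever (possibly swapped) pair realizes the maximum, show that a closed graph of $T$ forces a closed graph of $T^*$ via coordinatewise convergence, and then invoke Theorem \ref{thm:1.2}. One caveat: your parenthetical claim that monotonicity of $\psi$ is implied by (iv) together with (i)--(iii) is false, but it is also unnecessary at that step (no maximum needs to be passed through $\psi$; one simply applies (\ref{eq:2.1}) to the swapped arguments), and the only place where ``$\psi$ nondecreasing'' genuinely enters is as hypothesis (i) of Theorem \ref{thm:1.2} --- a mismatch with the stated hypotheses that is equally present in the paper's own statement and proof of this theorem.
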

\begin{proof}
Define a map $T^*: X^*\rightarrow X^*$ such that $T^*(x, y)= \left(T(x, y), T(y, x)\right) \ \ \text{for } \ (x, y)\in X^*$. 
Then from the proof of Theorem \ref{thm:2.1}, it is clear that $T^*$ satisfies condition (\ref{eq:1.1}).\\ 
Suppose that $T$ has a closed graph. Then for any sequences $\{x_n\}, \{y_n\}$ in $X$ such that $x_n\rightarrow x$, $y_n\rightarrow y$, $T(x_n, y_n)\rightarrow\alpha$ and $T(y_n, x_n)\rightarrow\beta$ as $n\rightarrow\infty$ we have, $T(x, y)= \alpha$ and $T(y, x)= \beta$.
From this, we can conclude that for any sequence $\{(x_n, y_n)\}\subset X\times X$ with $(x_n, y_n)\rightarrow (x,y)$ and $T^*(x_n, y_n)\rightarrow (\alpha, \beta)$ we get, $T^*(x, y)= (\alpha, \beta)$, which means $T^*$ has a closed graph.
Thus if graph of $T$ is closed then graph of $T^*$ is also closed.
Hence the result follows from the conditions \textit{(i)- (v)} in the hypothesis and Theorem \ref{thm:1.2}.
\end{proof}
\begin{eg}
Let $M= \Big\{\frac{1}{2^n}: n\in\mathbb{Z}^+\cup\{0\}\Big\}$ and  $X= M\cup \{0\}$. Let $d: X\times X\rightarrow \mathbb{R}$ be defined as $d(x, y)=|x- y|$. It can be easily verified that $(X, d)$ is a complete metric space. Also, consider the complete metric space $X\times X$ with the maximum metric given by $$d^*\left((x, y), (u, v)\right)=\max\Big\{\left|x-u\right|, \left|y- v\right|\Big\}\ \text{for any }(x, y), (u, v)\in X\times X.$$ Define a map $T: X\times X\rightarrow X$ such that 
\begin{equation*}
    T(x, y)= 
    \begin{cases}
    \frac{1}{2^{\min\{m, n\}+1}} & \text{if } (x, y)\in M\times M \\
    \frac{1}{2^{n+1}} & \text{if } (x, y)\in (M\times\{0\})\cup (\{0\}\times M)\\
    0 & \text{if }(x, y)= (0, 0)
    \end{cases}
\end{equation*}
Also, define $\psi, \phi: (0, \infty)\rightarrow\mathbb{R}$ as follows:
\begin{equation*}
    \psi(t)=
    \begin{cases}
    \frac{t}{2} & \text{if } t\in(0, \frac{1}{2})\\
    \frac{3t}{2} &\text{if }t\in [\frac{1}{2}, 1)\\
    3t & \text{if }t\geq 1
    \end{cases}\ \ \ \text{and}\ \ \
    \phi(t)=
    \begin{cases}
    \frac{t}{4} & \text{if } t\in (0, \frac{1}{2})\\
    t & \text{if } t\in [\frac{1}{2}, 1)\\
    2t &\text{if }t\geq 1
    \end{cases}
\end{equation*}
Here the functions $\psi$ and $\phi$ satisfy the conditions \textit{(i)- (v)} of Theorem \ref{thm:2.2}.\\
\textbf{Case 1}: For $m, n, p,q\geq 0$ with $\min\{m, n\}\neq \min\{p, q\}$, we have
\[\begin{split}\psi\left(d\left(T\left(\frac{1}{2^m}, \frac{1}{2^n}\right), T\left(\frac{1}{2^p}, \frac{1}{2^q}\right)\right)\right)&=\psi\left(\left|\frac{1}{2^{\min\{m, n\}+1}}-\frac{1}{2^{\min\{p, q\}+1}}\right|\right)\\ &= \frac{1}{4}\left|\frac{1}{2^{\min\{m, n\}}}-\frac{1}{2^{\min\{p, q\}}}\right|
\end{split}\]and
\[\begin{split}\phi\left(d^*\left(\left(\frac{1}{2^m}, \frac{1}{2^n}\right), \left(\frac{1}{2^p}, \frac{1}{2^q}\right)\right)\right)&= \phi\left(\max\left\{\left|\frac{1}{2^m}-\frac{1}{2^p}\right|, \left|\frac{1}{2^n}-\frac{1}{2^q}\right|\right\}\right)\\
&=\begin{cases}
\frac{1}{4}\max\left\{\left|\frac{1}{2^m}-\frac{1}{2^p}\right|, \left|\frac{1}{2^n}-\frac{1}{2^q}\right|\right\} &\text{if }m, n, p, q> 0\\
\max\left\{\left|\frac{1}{2^m}-\frac{1}{2^p}\right|, \left|\frac{1}{2^n}-\frac{1}{2^q}\right|\right\} &\text{otherwise}.
\end{cases}
\end{split}\]
\textbf{Case 2}: For $m, n, p\geq 0$ with $\min\{m, n\}\neq p$, we get
\[\psi\left(d\left(T\left(\frac{1}{2^m}, \frac{1}{2^n}\right), T\left(\frac{1}{2^p}, 0\right)\right)\right)= \psi\left(\left|\frac{1}{2^{\min\{m, n\}+1}}-\frac{1}{2^{p+1}}\right|\right)= \frac{1}{4}\left|\frac{1}{2^{\min\{m, n\}}}-\frac{1}{2^{p}}\right|
\]and
\[\begin{split}\phi\left(d^*\left(\left(\frac{1}{2^m}, \frac{1}{2^n}\right), \left(\frac{1}{2^p}, 0\right)\right)\right)&= \phi\left(\max\left\{\left|\frac{1}{2^m}-\frac{1}{2^p}\right|, \frac{1}{2^n}\right\}\right)\\
&=\begin{cases}
\frac{1}{4}\max\left\{\left|\frac{1}{2^m}-\frac{1}{2^p}\right|, \frac{1}{2^n}\right\} &\text{if }\max\left\{\left|\frac{1}{2^m}-\frac{1}{2^p}\right|, \frac{1}{2^n}\right\}<\frac{1}{2}\\
\max\left\{\left|\frac{1}{2^m}-\frac{1}{2^p}\right|, \frac{1}{2^n}\right\} &\text{if }\max\left\{\left|\frac{1}{2^m}-\frac{1}{2^p}\right|, \frac{1}{2^n}\right\}\geq \frac{1}{2}\\
2 &\text{if }n= 0.
\end{cases}
\end{split}\]
\textbf{Case 3}: If $m, p\geq 0$ and $m\neq p$, we get
\[\psi\left(d\left(T\left(\frac{1}{2^m}, 0\right), T\left(\frac{1}{2^p}, 0\right)\right)\right)= \psi\left(\left|\frac{1}{2^{m+1}}-\frac{1}{2^{p+1}}\right|\right)= \frac{1}{4}\left|\frac{1}{2^{m}}-\frac{1}{2^{p}}\right|
\]and
\[\begin{split}
\phi\left(d^*\left(\left(\frac{1}{2^m}, 0\right), \left(\frac{1}{2^p}, 0\right)\right)\right)&=\phi\left(\max\left\{\left|\frac{1}{2^m}-\frac{1}{2^p}\right|, 0\right\}\right)\\
&=\begin{cases}
\frac{1}{4}\left|\frac{1}{2^m}-\frac{1}{2^p}\right| &\text{if }\left|\frac{1}{2^m}-\frac{1}{2^p}\right|<\frac{1}{2}\\
\left|\frac{1}{2^m}-\frac{1}{2^p}\right| &\text{if }\left|\frac{1}{2^m}-\frac{1}{2^p}\right|\geq \frac{1}{2}.
\end{cases}
\end{split}\]On the other hand
\[\psi\left(d\left(T\left(\frac{1}{2^m}, 0\right), T\left(0, \frac{1}{2^p}\right)\right)\right)= \psi\left(\left|\frac{1}{2^{m+1}}-\frac{1}{2^{p+1}}\right|\right)= \frac{1}{4}\left|\frac{1}{2^{m}}-\frac{1}{2^{p}}\right|
\]and
\[\begin{split}
\phi\left(d^*\left(\left(\frac{1}{2^m}, 0\right), \left(0, \frac{1}{2^p}\right)\right)\right)&=\phi\left(\max\left\{\frac{1}{2^m}, \frac{1}{2^p}\right\}\right)\\
&=\begin{cases}
\frac{1}{4}\max\left\{\frac{1}{2^m}, \frac{1}{2^p}\right\} &\text{if }\max\left\{\frac{1}{2^m}, \frac{1}{2^p}\right\}<\frac{1}{2}\\
\frac{1}{2} &\text{if }\max\left\{\frac{1}{2^m}, \frac{1}{2^p}\right\}= \frac{1}{2}\\
2 &\text{if }\max\left\{\frac{1}{2^m}, \frac{1}{2^p}\right\}= 1.
\end{cases}
\end{split}\]
\textbf{Case 4}: For $m, n> 0$, we have
\[\psi\left(d\left(T\left(\frac{1}{2^m}, \frac{1}{2^n}\right), T(0, 0)\right)\right)= \psi\left(\frac{1}{2^{\min\{m, n\}+1}}\right)= \frac{1}{4}\left(\frac{1}{2^{\min\{m, n\}}}\right)
\]and
\[\begin{split}
\phi\left(d^*\left(\left(\frac{1}{2^m}, \frac{1}{2^n}\right), (0, 0)\right)\right)&=\phi\left(\max\left\{\frac{1}{2^m}, \frac{1}{2^n}\right\}\right)\\
&= \begin{cases}
\frac{1}{4}\max\left\{\frac{1}{2^m}, \frac{1}{2^n}\right\} &\text{if }\max\left\{\frac{1}{2^m}, \frac{1}{2^n}\right\}< \frac{1}{2}\\
\frac{1}{2} &\text{if }\max\left\{\frac{1}{2^m}, \frac{1}{2^n}\right\}=\frac{1}{2}.
\end{cases}
\end{split}\]
\textbf{Case 5}: For $m> 0$, we get
\[\psi\left(d\left(T\left(1, \frac{1}{2^m}\right), T(0, 0)\right)\right)= \psi\left(\frac{1}{2^{\min\{0, m\}+1}}\right)= \psi\left(\frac{1}{2}\right)= \frac{3}{4}
\]and
\[\phi\left(d^*\left(\left(1, \frac{1}{2^m}\right), (0, 0)\right)\right)= \phi\left(\max\left\{1, \frac{1}{2^m}\right\}\right)=\phi(1)= 2
\]Also,
\[\psi\left(d\left(T\left(\frac{1}{2^m}, 0\right), T(0, 0)\right)\right)=\psi\left(\frac{1}{2^{m+1}}\right)=\frac{1}{4}\left(\frac{1}{2^m}\right)
\]and
\[\phi\left(d^*\left(\left(\frac{1}{2^m}, 0\right), (0, 0)\right)\right)= \phi\left(\frac{1}{2^m}\right)=\begin{cases}
\frac{1}{4}\left(\frac{1}{2^m}\right) &\text{if }m> 1\\
\frac{1}{2} &\text{if }m= 1.
\end{cases}
\]
\textbf{Case 6}:
\[\psi\left(d\left(T(1, 0), T(0, 0)\right)\right)=\psi\left(d\left(T(\frac{1}{2^0}, 0), T(0, 0)\right)\right)=\psi\left(\frac{1}{2}\right)= \frac{3}{4}
\]and
\[\phi\left(d^*\left((1, 0), (0, 0)\right)\right)=\phi(1)= 2.
\]
From all the above discussed cases, we can observe that, for any $(x, y), (u, v)\in X\times X$, we have
$$\psi\left(d\left(T(x, y), T(u, v)\right)\right)\leq \phi\left(d^*\left((x, y), (u, v)\right)\right)$$
 Hence $T$ satisfies condition (\ref{eq:2.1}) and all the hypotheses of Theorem \ref{thm:2.2}. It is evident that $(x, y)= (0, 0)$ is the only coupled fixed point of $T$.
\end{eg}
\section{Extended coupled fixed points of $(\psi, \phi)$-contractions}
In this section we will consider more general problem in coupled fixed points.
Here, instead of having a product of same complete metric space, we will deal with a product of two different complete metric spaces. \\
Let $(X,d)$ and $(Y, \rho)$ be two complete metric spaces. We define $Z= X\times Y$ and a function $\mu:Z\rightarrow \mathbb{R}$ such that 
\begin{equation}\label{eqn:3.1}
 \mu(z, w)= \mu\left((x,y), (u, v)\right)= \max\{d(x, u), \rho(y, v)\} \text{ for any } z=(x,y), w= (u, v) \text{ in } Z.   
\end{equation}
We can easily observe that, since $(X, d)$ and $(Y, \rho)$ are complete metric spaces, the metric space $(Z, \mu)$ is also complete. 
\begin{defn}
Let $(X, d)\text{ and } (Y, \rho)$ be two complete metric spaces. Let $T:X\times Y\rightarrow X$ and $S:X\times Y\rightarrow Y$ be two maps. $T$ and $S$ are said to be extended jointly $(\psi, \phi)$-contractions if for $z= (x, y),\ w= (u, v)\in X\times Y$ with $\max\{d\left(T(x, y), T(u, v)\right), \rho\left(S(x, y), S(u, v)\right)\}> 0$, we have
\begin{equation}\label{eq:3.1}
\begin{split}
    \psi(d(Tz, Tw)) &\leq \phi(\max\{d(x,u), \rho(y, v)\})\\
    \psi(\rho(Sz, Sw)) &\leq \phi(\max\{d(x, u), \rho(y, v)\})
\end{split}
\end{equation}where $\psi, \phi:(0, \infty)\rightarrow\mathbb{R}$ are two functions such that $\phi(t)< \psi(t)$ for every $t> 0$.
\end{defn}
\begin{lem}\label{lem:3.1}
Let $(X, d)\text{ and } (Y, \rho)$ be two complete metric spaces.
Let $T:X\times Y\rightarrow X\text{ and } S:X\times Y\rightarrow Y$ be extended jointly $(\psi, \phi)$-contractions, where $\psi, \phi:(0, \infty)\rightarrow\mathbb{R}$. Let $F_{TS}:X\times Y\rightarrow X\times Y$ be defined by $F_{TS}(x, y)=\left(T(x, y),S(x, y)\right)$ for $(x, y)\in X\times Y.$ Then $F_{TS}$ is a $(\psi, \phi)$-contraction.  
\end{lem}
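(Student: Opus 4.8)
The plan is to mirror the argument already used in the proof of Theorem~\ref{thm:2.1}, where a map on the single factor $X^*=X\times X$ was built out of a jointly $(\psi,\phi)$-contraction; here the only change is that the second coordinate lives in a possibly different space $(Y,\rho)$. First I would recall that $(Z,\mu)$ with $\mu$ as in (\ref{eqn:3.1}) is a complete metric space, as already observed just before the statement, so that it makes sense to speak of $F_{TS}$ being a $(\psi,\phi)$-contraction on $(Z,\mu)$. Then, to verify condition (\ref{eq:1.1}) for $F_{TS}$, I would fix $z=(x,y)$ and $w=(u,v)$ in $Z$ with $\mu(F_{TS}z,F_{TS}w)>0$, and unfold the definition: since $F_{TS}z=(Tz,Sz)$ and $F_{TS}w=(Tw,Sw)$, we have $\mu(F_{TS}z,F_{TS}w)=\max\{d(Tz,Tw),\rho(Sz,Sw)\}$.

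The next step is a two–case split according to which term realizes this maximum. Because $\mu(F_{TS}z,F_{TS}w)>0$, the quantity $\max\{d(Tz,Tw),\rho(Sz,Sw)\}$ is strictly positive, which is exactly the triggering hypothesis of the extended jointly $(\psi,\phi)$-contraction condition (\ref{eq:3.1}); hence both inequalities in (\ref{eq:3.1}) are available for this pair $z,w$. If $\mu(F_{TS}z,F_{TS}w)=d(Tz,Tw)$, I apply the first inequality of (\ref{eq:3.1}); if instead $\mu(F_{TS}z,F_{TS}w)=\rho(Sz,Sw)$, I apply the second one. In either case the right-hand side is $\phi(\max\{d(x,u),\rho(y,v)\})$, which by the definition of $\mu$ is precisely $\phi(\mu(z,w))$. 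Thus $\psi(\mu(F_{TS}z,F_{TS}w))\le\phi(\mu(z,w))$ whenever $\mu(F_{TS}z,F_{TS}w)>0$, i.e. $F_{TS}$ satisfies (\ref{eq:1.1}) on $(Z,\mu)$. Finally, the pair of functions $\psi,\phi$ here is the same pair attached to $T$ and $S$, which by the definition of extended jointly $(\psi,\phi)$-contraction already satisfies $\phi(t)<\psi(t)$ for every $t>0$; this is the remaining requirement in the definition of a $(\psi,\phi)$-contraction, so the conclusion follows.

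I do not expect a genuine obstacle in this lemma: the content is purely a bookkeeping verification, the $X\times Y$ analogue of the $X\times X$ computation carried out for $T^{*}$ in Theorem~\ref{thm:2.1}. The only points that need a moment's care are that the case split is exhaustive (the maximum of two real numbers is always attained by at least one of them, so one of the two sub-cases must hold) and that the invocation of (\ref{eq:3.1}) is legitimate, which is guaranteed precisely by the standing assumption $\mu(F_{TS}z,F_{TS}w)>0$. No monotonicity or limit conditions on $\psi,\phi$ are needed at this stage; they will only enter when Lemma~\ref{lem:3.1} is combined with Theorem~\ref{thm:1.1} or Theorem~\ref{thm:1.2}.
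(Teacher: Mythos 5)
Your proposal is correct and follows essentially the same route as the paper's own proof: work in the complete space $(Z,\mu)$, split into the two cases according to whether $d(Tz,Tw)$ or $\rho(Sz,Sw)$ realizes the maximum, and apply the corresponding inequality of (\ref{eq:3.1}) to obtain $\psi\left(\mu\left(F_{TS}z,F_{TS}w\right)\right)\leq\phi\left(\mu(z,w)\right)$. Your added remark that $\mu(F_{TS}z,F_{TS}w)>0$ is exactly the triggering hypothesis of (\ref{eq:3.1}) is a small point the paper leaves implicit, but it does not change the argument.
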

\begin{proof}
Let $Z= X\times Y$ and $\mu$ be the metric defined as in equation (\ref{eqn:3.1}). Note that $(Z, \mu)$ is a complete metric space and assume that $z=(x, y),\ w= (u, v) \in Z$. Then
\begin{equation}\label{eq:3.2}
   \begin{split}
    \psi\left(\mu\left(F_{TS}(z), F_{TS}(w)\right)\right) &= \psi\left(\mu\left(\left(T(x, y), S(x, y)\right), \left(T(u, v), S(u, v)\right)\right)\right)\\ &=\psi\left(\max\{d\left(T(x, y), T(u, v)\right), \rho\left(S(x, y), S(u, v)\right)\}\right)
\end{split}
\end{equation}
Suppose that $\mu\left(F_{TS}(z), F_{TS}(w)\right)= d\left(T(x, y), T(u, v)\right)$. Then by condition (\ref{eq:3.1}), equation (\ref{eq:3.2}) becomes:
\begin{align*}
   \psi\left(\mu\left(F_{TS}(z), F_{TS}(w)\right)\right) &= \psi\left(d\left(T(x, y), T(u, v)\right)\right)\\
   &\leq \phi\left(\max\{d(x, u), \rho(y, v)\}\right)\\ &=\phi\left(\mu(z, w)\right).
\end{align*}
On the other hand, if $\mu\left(F_{TS}(z), F_{TS}(w)\right)= \rho\left(S(x, y), S(u, v)\right)$, then by a similar argument as above we get,$$\mu\left(F_{TS}(z), F_{TS}(w)\right)\leq \phi\left(\mu(z, w)\right).$$
Hence the map $F_{TS}$ is a $(\psi, \phi)$-contraction.
\end{proof}
We define iterative sequences for the maps $T$ and $S$ as follows:
$$\begin{cases}T^2(x,y) &= T\left(T(x,y), S(x,y)\right)\\
S^2(x, y) &= S\left(T(x, y), S(x, y)\right)\\
T^3(x,y) &= T\left(T^2(x,y), S^2(x,y)\right)\\  
S^3(x, y) &= S\left(T^2(x, y), S^2(x, y)\right)\\ &\vdots\\ T^n(x,y) &= T\left(T^{n-1}(x, y), S^{n-1}(x, y)\right)\\
S^n(x, y) &= S\left(T^{n-1}(x, y), S^{n-1}(x, y)\right)
\end{cases}$$where $(x, y)\in X\times Y$.\\
Now we can extend Theorem \ref{thm:1.1} to extended jointly $(\psi, \phi)$-contractions to get a more generalized coupled fixed points.
\begin{theorem}
Let $(X, d), (Y, \rho)$ be two complete metric spaces and $T:X\times Y\rightarrow X$ and $S:X\times Y\rightarrow Y$ be extended jointly $(\psi, \phi)$-contractions.
If the functions $\psi, \phi$ satisfy the conditions:
\begin{enumerate}[label=(\roman*)]
\item $\psi$ is nondecreasing;
\item $\phi(t)< \psi(t)$ for every $t>0$;
\item $\limsup\limits_{t\rightarrow\epsilon+}\phi(t)< \psi(\epsilon+)$ for every $\epsilon>0$,\end{enumerate}
then there exists a unique point $\xi^*= (x^*, y^*)\in X\times Y$ such that,
\[\begin{cases}
x^*= T(x^*, y^*)\\
y^*= S(x^*, y^*)
\end{cases}\]
and the sequences $x_n= T^n(x, y)$ and $y_n= S^n(x, y)$ converge to $x^*$ and $y^*$ respectively for any $(x, y)\in X\times Y$.
\end{theorem}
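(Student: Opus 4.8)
The plan is to reduce the statement to Theorem~\ref{thm:1.1} via the auxiliary map $F_{TS}$, mirroring the reduction of Theorem~\ref{thm:2.1} through $T^*$. First I would work in $Z = X \times Y$ with the metric $\mu$ of \eqref{eqn:3.1}, which is complete because $(X,d)$ and $(Y,\rho)$ are. By Lemma~\ref{lem:3.1} the map $F_{TS}(x,y) = (T(x,y), S(x,y))$ is a $(\psi,\phi)$-contraction on $(Z,\mu)$, i.e. it satisfies \eqref{eq:1.1} with the same functions $\psi,\phi$. Since hypotheses (i)--(iii) here are exactly the ones demanded of $\psi$ and $\phi$ in Theorem~\ref{thm:1.1}, that theorem applies to $F_{TS}$ on $(Z,\mu)$.

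Next I would read off the conclusions. Theorem~\ref{thm:1.1} produces a unique $\xi^* = (x^*,y^*) \in Z$ with $F_{TS}(\xi^*) = \xi^*$; writing out the two coordinates, $F_{TS}(x^*,y^*) = (T(x^*,y^*), S(x^*,y^*)) = (x^*,y^*)$ is precisely the coupled system $x^* = T(x^*,y^*)$, $y^* = S(x^*,y^*)$. Uniqueness transfers directly: any solution $(a,b)$ of the coupled system satisfies $F_{TS}(a,b) = (a,b)$, hence $(a,b) = \xi^*$ by uniqueness of the fixed point of $F_{TS}$.

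For the convergence claim I would first record the bookkeeping identity $F_{TS}^{\,n}(x,y) = \big(T^n(x,y),\, S^n(x,y)\big)$ for all $n \ge 2$ and all $(x,y) \in X \times Y$, proved by induction from the recursive definitions of $T^n$ and $S^n$: the base case is $F_{TS}^{\,2}(x,y) = F_{TS}\big(T(x,y), S(x,y)\big) = \big(T(T(x,y),S(x,y)),\, S(T(x,y),S(x,y))\big) = \big(T^2(x,y), S^2(x,y)\big)$, and the inductive step is the same substitution with $T^{n-1}, S^{n-1}$ in place of $T, S$. Granting this, the convergence $F_{TS}^{\,n}(x,y) \to \xi^*$ supplied by Theorem~\ref{thm:1.1} says, since $\mu$ is the maximum of the $d$- and $\rho$-distances of the respective coordinates, exactly that $T^n(x,y) \to x^*$ in $(X,d)$ and $S^n(x,y) \to y^*$ in $(Y,\rho)$.

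There is no real obstacle here: the analytic substance is already packaged in Lemma~\ref{lem:3.1} and Theorem~\ref{thm:1.1}, so the proof is essentially a bookkeeping reduction. The only points deserving a line of care are (a) the identity $F_{TS}^{\,n} = (T^n, S^n)$, needed to translate convergence of the iterates of $F_{TS}$ into the stated convergence of $x_n$ and $y_n$; and (b) confirming that the case split in Lemma~\ref{lem:3.1} --- whether $\mu(F_{TS}z, F_{TS}w)$ is realized by $d(Tz,Tw)$ or by $\rho(Sz,Sw)$ --- invokes \eqref{eq:3.1} legitimately, which it does because the coordinate attaining the positive maximum is exactly the one whose inequality in \eqref{eq:3.1} is available.
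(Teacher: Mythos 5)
Your proposal is correct and follows the same route as the paper: reduce to the product space $(Z,\mu)$, invoke Lemma~\ref{lem:3.1} to see $F_{TS}$ is a $(\psi,\phi)$-contraction, and apply Theorem~\ref{thm:1.1}. The only difference is that you spell out the bookkeeping identity $F_{TS}^{\,n}(x,y)=(T^n(x,y),S^n(x,y))$ and the coordinatewise reading of convergence in $\mu$, which the paper leaves implicit.
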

\begin{proof}
Define a map $F_{TS}:X\times Y\rightarrow X\times Y$ by $F_{TS}(x, y)=\left(T(x, y),S(x, y)\right)$ for $(x, y)\in X\times Y.$ Then by Lemma \ref{lem:3.1}, we have $F_{TS}$ is a $(\psi, \phi)$-contraction.
Hence the conditions \textit{(i)-(iii)} in the hypotheses and Theorem \ref{thm:1.1} establish the result.
\end{proof}
\begin{theorem}\label{thm:3.2}
Let $(X, d), (Y, \rho)$ be two complete metric spaces, $T:X\times Y\rightarrow X$ and $S:X\times Y\rightarrow Y$ be extended jointly $(\psi, \phi)$-contractions. 
If the functions $\psi, \phi$ satisfy the conditions:
\begin{enumerate}[label=(\roman*)]
\item $\phi(t)< \psi(t)$ for any $t>0$;
\item $\inf\limits_{t>\epsilon}\psi(t)>-\infty$ for any $\epsilon>0$;
\item if $\{\psi(t_n)\}$ and $\{\phi(t_n)\}$ are convergent sequences with the same limit  and $\{\psi(t_n)\}$ is strictly decreasing, then $t_n\rightarrow 0$ as $n\rightarrow \infty$;
\item $\limsup\limits_{t\rightarrow \epsilon+}\phi(t)< \liminf\limits_{t\rightarrow \epsilon+}\psi(t)$ for any $\epsilon>0$;
\item $T$ and $S$ have closed graph or $\limsup\limits_{t\rightarrow 0+}\phi(t)<\min\{\liminf\limits_{t\rightarrow\epsilon}\psi(t), \psi(\epsilon)\}$ for any $\epsilon>0$.\end{enumerate}
Then there exists a unique point $\xi^*= (x^*, y^*)\in X\times Y$ such that,
\[\begin{cases}
x^*= T(x^*, y^*)\\
y^*= S(x^*, y^*)
\end{cases}\]
and the sequences $x_n= T^n(x, y)$ and $y_n= S^n(x, y)$ converge to $x^*$ and $y^*$ respectively for any $(x, y)\in X\times Y$.
\end{theorem}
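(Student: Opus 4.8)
The plan is to follow the reduction used in the proof of Theorem \ref{thm:2.2}: I would pass from the coupled system for $(T,S)$ on $X\times Y$ to an ordinary fixed point problem for the single map $F_{TS}$ on the product metric space $(Z,\mu)$, with $Z=X\times Y$ and $\mu$ as in (\ref{eqn:3.1}), and then invoke Theorem \ref{thm:1.2}.

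First I would recall from the discussion preceding Lemma \ref{lem:3.1} that $(Z,\mu)$ is complete, and from Lemma \ref{lem:3.1} that the map $F_{TS}(x,y)=(T(x,y),S(x,y))$ is a $(\psi,\phi)$-contraction on $(Z,\mu)$, i.e.\ it satisfies (\ref{eq:1.1}) with the same pair $\psi,\phi$. Since the functions $\psi$ and $\phi$ are unchanged, conditions (i)--(iv) --- which constrain only $\psi$ and $\phi$ --- transfer to $F_{TS}$ verbatim and are the corresponding hypotheses of Theorem \ref{thm:1.2}. For condition (v): the alternative $\limsup_{t\to 0+}\phi(t)<\min\{\liminf_{t\to\epsilon}\psi(t),\psi(\epsilon)\}$ is again a statement about $\psi,\phi$ alone and so carries over directly; in the remaining case $T$ and $S$ both have closed graphs, and I would check that $F_{TS}$ then has a closed graph as well: if $(x_n,y_n)\to(x,y)$ in $Z$ and $F_{TS}(x_n,y_n)\to(\alpha,\beta)$, then $T(x_n,y_n)\to\alpha$ in $X$ and $S(x_n,y_n)\to\beta$ in $Y$, so closedness of the graphs of $T$ and of $S$ forces $T(x,y)=\alpha$ and $S(x,y)=\beta$, i.e.\ $F_{TS}(x,y)=(\alpha,\beta)$. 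Hence all hypotheses of Theorem \ref{thm:1.2} hold for $F_{TS}$ on $(Z,\mu)$.

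Applying Theorem \ref{thm:1.2} then produces a unique $\xi^*=(x^*,y^*)\in Z$ with $F_{TS}(\xi^*)=\xi^*$, together with ${F_{TS}}^n(x,y)\to\xi^*$ for every $(x,y)\in Z$. Unwinding, $F_{TS}(x^*,y^*)=(T(x^*,y^*),S(x^*,y^*))=(x^*,y^*)$ gives $x^*=T(x^*,y^*)$ and $y^*=S(x^*,y^*)$, and uniqueness of $\xi^*$ in $Z$ is exactly uniqueness of the coupled fixed point. For the convergence statement, I would show by induction on $n$ that ${F_{TS}}^n(x,y)=(T^n(x,y),S^n(x,y))$, using the recursions defining $T^n$ and $S^n$: the base case is the definition of $F_{TS}$, and the inductive step is $F_{TS}({F_{TS}}^{n-1}(x,y))=F_{TS}(T^{n-1}(x,y),S^{n-1}(x,y))=(T^n(x,y),S^n(x,y))$. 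Since convergence in the max-metric $\mu$ amounts to componentwise convergence in $d$ and in $\rho$, it follows that $x_n=T^n(x,y)\to x^*$ and $y_n=S^n(x,y)\to y^*$.

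This is essentially a packaging of Theorem \ref{thm:1.2} through Lemma \ref{lem:3.1}, so I do not expect a substantial obstacle; the only points that need genuine (if routine) care are the transfer of the closed-graph alternative in (v) to $F_{TS}$ and the induction identifying ${F_{TS}}^n$ with $(T^n,S^n)$. As in the proof of Theorem \ref{thm:2.2}, the standing requirement $\phi(t)<\psi(t)$ built into the definition of an extended jointly $(\psi,\phi)$-contraction supplies whatever is needed beyond conditions (i)--(v) for the hypotheses of Theorem \ref{thm:1.2} to be met in full.
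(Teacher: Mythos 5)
Your proposal is correct and follows essentially the same route as the paper's own proof: reduce to the single map $F_{TS}$ on $(Z,\mu)$ via Lemma \ref{lem:3.1}, transfer the closed-graph alternative from $T$ and $S$ to $F_{TS}$, and apply Theorem \ref{thm:1.2}. Your added remarks (the induction identifying ${F_{TS}}^n$ with $(T^n,S^n)$ and componentwise convergence in the max metric) merely make explicit steps the paper leaves implicit.
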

\begin{proof}
Define a map $F_{TS}:X\times Y\rightarrow X\times Y$ by $F_{TS}(x, y)=\left(T(x, y),S(x, y)\right)$ for $(x, y)\in X\times Y.$ Then by Lemma \ref{lem:3.1}, we have $F_{TS}$ is a $(\psi, \phi)$-contraction.
Suppose that the graphs of $T$ and $S$ are closed. Then for any sequence $\{(x_n, y_n)\}$ in $X\times Y$ with $(x_n, y_n)\rightarrow (x, y)$, $T(x_n, y_n)\rightarrow \alpha$ and $S(x_n, y_n)\rightarrow \beta$, we have $T(x, y)= \alpha$ and $S(x, y)= \beta$.
Since $T(x_n, y_n)\rightarrow \alpha$ and $S(x_n, y_n)\rightarrow \beta$, we get $F_{TS}(x_n, y_n)= (T(x_n, y_n), S(x_n, y_n))\rightarrow (\alpha, \beta)$.
Also, $T(x, y)= \alpha$ and $S(x, y)= \beta$ implies $F_{TS}(x, y)= (\alpha, \beta)$.
Hence we are able to conclude that if $\{(x_n, y_n)\}$ in $X\times Y$ with $(x_n, y_n)\rightarrow (x, y)$ and $F_{TS}(x_n, y_n)= \left(T(x_n, y_n), S(x_n, y_n)\right)\rightarrow (\alpha, \beta)$ then $F_{TS}(x, y)= (\alpha, \beta)$.
This implies that $F_{TS}$ has a closed graph if $T$ and $S$ have closed graphs.
With this fact along with conditions \textit{(i)- (v)} in the hypotheses and Theorem \ref{thm:1.2} we can complete the proof.
\end{proof}
Even more generally, suppose the maps $T\text{ and } S$ defined above satisfy the following contractive conditions:
\begin{equation}\label{eq:3.3}
\begin{split}
    \psi(d(Tz, Tw)) &\leq \phi_1(\max\{d(x,u), \rho(y, v)\})\\
    \psi(\rho(Sz, Sw)) &\leq \phi_2(\max\{d(x, u), \rho(y, v)\})
\end{split}
\end{equation}
for $z= (x, y),\ w= (u, v)\in X\times Y$ with $\max\{d\left(T(x, y), T(u, v)\right), \rho\left(S(x, y), S(u, v)\right)\}> 0$, where $\psi, \phi_1, \phi_2:(0, \infty)\rightarrow\mathbb{R}$ are functions such that $\phi_i(t)< \psi(t)$ for $i= 1, 2$ and for every $t> 0$.
Let us define a function $\phi:(0, \infty)\rightarrow\mathbb{R}$ as $\phi(t)= \max\{\phi_1(t), \phi_2(t)\}$ for $t> 0$. Since $\phi_i(t)< \psi(t)$ for $i= 1, 2$ and for every $t> 0$, we get $\phi(t)< \psi(t)$ for every $t> 0$. Then we can have the following Lemma.
\begin{lem}\label{lem:3.2}
Let $(X, d)\text{ and } (Y, \rho)$ be two complete metric spaces and $Z= X\times Y$.
Let $T:Z\rightarrow X\text{ and } S:Z\rightarrow Y$ satisfy condition (\ref{eq:3.3}). Define a map $F_{TS}:Z\rightarrow Z$ by $F_{TS}(x, y)=\left(T(x, y),S(x, y)\right)$ for $(x, y)\in Z.$ Then $F_{TS}$ is a $(\psi, \phi)$-contraction.  
\end{lem}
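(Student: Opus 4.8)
The plan is to follow the proof of Lemma~\ref{lem:3.1} line for line, the only new ingredient being the observation that the function $\phi=\max\{\phi_1,\phi_2\}$ dominates both $\phi_1$ and $\phi_2$ pointwise on $(0,\infty)$. First I would recall that, with $Z=X\times Y$ equipped with the metric $\mu$ of $(\ref{eqn:3.1})$, the space $(Z,\mu)$ is complete, and that by construction $\phi(t)<\psi(t)$ for every $t>0$, so that a self-map of $(Z,\mu)$ satisfying $(\ref{eq:1.1})$ is indeed a $(\psi,\phi)$-contraction in the required sense. Then I would fix $z=(x,y)$ and $w=(u,v)$ in $Z$ with $\mu\left(F_{TS}(z),F_{TS}(w)\right)>0$ and expand
$$\mu\left(F_{TS}(z),F_{TS}(w)\right)=\max\{d\left(T(x,y),T(u,v)\right),\rho\left(S(x,y),S(u,v)\right)\},$$
observing that the term attaining this maximum is itself strictly positive, so that the corresponding inequality of $(\ref{eq:3.3})$ applies.

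The argument then splits into two cases, just as in Lemma~\ref{lem:3.1}. If the maximum equals $d\left(T(x,y),T(u,v)\right)$, then using the first line of $(\ref{eq:3.3})$ together with $\phi_1\leq\phi$,
\begin{align*}
\psi\left(\mu\left(F_{TS}(z),F_{TS}(w)\right)\right)&=\psi\left(d\left(T(x,y),T(u,v)\right)\right)\\
&\leq\phi_1\left(\max\{d(x,u),\rho(y,v)\}\right)\\
&\leq\phi\left(\mu(z,w)\right).
\end{align*}
If instead the maximum equals $\rho\left(S(x,y),S(u,v)\right)$, the same computation with the second line of $(\ref{eq:3.3})$ and $\phi_2\leq\phi$ gives $\psi\left(\mu\left(F_{TS}(z),F_{TS}(w)\right)\right)\leq\phi\left(\mu(z,w)\right)$. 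In either case $F_{TS}$ satisfies $(\ref{eq:1.1})$ on the complete metric space $(Z,\mu)$, hence $F_{TS}$ is a $(\psi,\phi)$-contraction, which is the assertion.

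I do not expect any genuine obstacle here: the statement is a routine variant of Lemma~\ref{lem:3.1}, and replacing the single control function by the pair $\phi_1,\phi_2$ costs nothing once one sets $\phi=\max\{\phi_1,\phi_2\}$. The only point deserving a line of care is the remark that $\mu\left(F_{TS}(z),F_{TS}(w)\right)>0$ forces the maximizing term to be positive, which is precisely what licenses invoking $(\ref{eq:3.3})$; and one must carry along the pointwise inequality $\phi(t)<\psi(t)$ for $t>0$, already noted before the statement, so that the conclusion is a bona fide $(\psi,\phi)$-contraction rather than merely a map obeying an inequality.
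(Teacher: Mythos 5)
Your proposal is correct and follows essentially the same argument as the paper: expand $\mu\left(F_{TS}(z),F_{TS}(w)\right)$ as the maximum, split into the two cases, apply the corresponding line of (\ref{eq:3.3}), and dominate $\phi_1,\phi_2$ by $\phi=\max\{\phi_1,\phi_2\}$. Your added remark that $\mu\left(F_{TS}(z),F_{TS}(w)\right)>0$ guarantees the hypothesis of (\ref{eq:3.3}) is a point the paper leaves implicit, and it is a welcome bit of extra care.
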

\begin{proof}
Note that $(Z, \mu)$ is a complete metric space. Let $z=(x, y),\ w= (u, v) \in Z$.
\begin{equation}\label{eq:3.4}
   \begin{split}
    \psi\left(\mu\left(F_{TS}(z), F_{TS}(w)\right)\right) &= \psi\left(\mu\left(\left(T(x, y), S(x, y)\right), \left(T(u, v), S(u, v)\right)\right)\right)\\ &=\psi\left(\max\{d\left(T(x, y), T(u, v)\right), \rho\left(S(x, y), S(u, v)\right)\}\right)
\end{split}
\end{equation}
Suppose $\mu\left(F_{TS}(z), F_{TS}(w)\right)= d\left(T(x, y), T(u, v)\right)$, then by condition (\ref{eq:3.3}) equation (\ref{eq:3.4}) becomes:
\begin{align*}
   \psi\left(\mu\left(F_{TS}(z), F_{TS}(w)\right)\right) &= \psi\left(d\left(T(x, y), T(u, v)\right)\right)\\
   &\leq \phi_1\left(\max\{d(x, u), \rho(y, v)\}\right)\\ &=\phi_1\left(\mu(z, w)\right)\\
   &\leq\phi\left(\mu(z, w)\right).
\end{align*}
On the other hand, if $\mu\left(F_{TS}(z), F_{TS}(w)\right)= \rho\left(S(x, y), S(u, v)\right)$, then again by condition (\ref{eq:3.3}) equation (\ref{eq:3.4}) becomes:
\begin{align*}
   \psi\left(\mu\left(F_{TS}(z), F_{TS}(w)\right)\right) &= \psi\left(\rho\left(S(x, y), S(u, v)\right)\right)\\
   &\leq \phi_2\left(\max\{d(x, u), \rho(y, v)\}\right)\\ &=\phi_2\left(\mu(z, w)\right)\\
   &\leq\phi\left(\mu(z, w)\right).
\end{align*} 
Thus in both cases we have $\mu\left(F_{TS}(z), F_{TS}(w)\right)\leq \phi\left(\mu(z, w)\right)$.
Hence the map $F_{TS}$ is a $(\psi, \phi)$-contraction.
\end{proof}
Our aim is to produce a fixed point theorem for a map $T$ satisfying the contractive condition (\ref{eq:3.3}). Before that  we need to prove the following Lemma.
\begin{lem}\label{lem:3.3}
Let $\phi_1, \phi_2: (0, \infty)\rightarrow \mathbb{R}$ be two functions such that for $t_0> 0$, $\limsup\limits_{t\rightarrow t_0}\phi_i(x)$ exists for $i= 1, 2$. If we define $\phi(t)= \max\{\phi_1(t), \phi_2(t)\}$, then \begin{center}$\limsup\limits_{t\rightarrow t_0}\phi(t)\leq \max\{\limsup\limits_{t\rightarrow t_0}\phi_1(t), \limsup\limits_{t\rightarrow t_0}\phi_2(t)\}$.\end{center}
\end{lem}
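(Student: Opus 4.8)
The plan is to prove the inequality directly from the $\varepsilon$--$\delta$ (equivalently, the $\inf$--$\sup$) description of the limit superior, which reduces the statement to the elementary fact that a maximum of two quantities, each eventually below a common bound, is itself eventually below that bound. Set $L_1=\limsup_{t\to t_0}\phi_1(t)$, $L_2=\limsup_{t\to t_0}\phi_2(t)$ and $M=\max\{L_1,L_2\}$; the goal is $\limsup_{t\to t_0}\phi(t)\le M$. To be careful, the ``$\limsup$ exists'' hypothesis should be read with the usual convention $\limsup_{t\to t_0}g(t)=\inf_{\delta>0}\sup\{g(t):t\in(0,\infty),\ 0<|t-t_0|<\delta\}$, taken in $[-\infty,+\infty]$; the argument below works verbatim when $M=+\infty$ (where the inequality is trivial) and covers the finite case that is actually needed.

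First I would fix an arbitrary $\varepsilon>0$. By the definition of $L_i$ as a limit superior, there is $\delta_i>0$ such that $\phi_i(t)<L_i+\varepsilon\le M+\varepsilon$ for every $t\in(0,\infty)$ with $0<|t-t_0|<\delta_i$. Put $\delta=\min\{\delta_1,\delta_2\}>0$. Then for every $t\in(0,\infty)$ with $0<|t-t_0|<\delta$ one has $\phi(t)=\max\{\phi_1(t),\phi_2(t)\}<M+\varepsilon$, hence $\sup\{\phi(t):t\in(0,\infty),\ 0<|t-t_0|<\delta\}\le M+\varepsilon$ and therefore $\limsup_{t\to t_0}\phi(t)\le M+\varepsilon$. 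Letting $\varepsilon\to 0^+$ yields $\limsup_{t\to t_0}\phi(t)\le M$, which is the claim.

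An equivalent route, if one prefers sequences, is the following: choose $s_n\to t_0$ with $s_n\ne t_0$ and $\phi(s_n)\to\limsup_{t\to t_0}\phi(t)$; for each $n$ pick $j_n\in\{1,2\}$ with $\phi(s_n)=\phi_{j_n}(s_n)$; by the pigeonhole principle one index, say $1$, is attained along a subsequence $\{s_{n_k}\}$, and then $\limsup_{t\to t_0}\phi(t)=\lim_k\phi_1(s_{n_k})\le\limsup_{t\to t_0}\phi_1(t)\le M$.

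I do not expect a genuine obstacle here: the statement is a routine property of limits superior. The only points requiring a little care are bookkeeping ones — restricting all neighbourhoods to the domain $(0,\infty)$ and disposing of the degenerate case $M=+\infty$ — and, should one want equality rather than the stated one-sided inequality, observing that $\phi\ge\phi_i$ forces $\limsup_{t\to t_0}\phi\ge\limsup_{t\to t_0}\phi_i$ for each $i$; however, only the inequality as stated is invoked in the sequel, so this refinement is optional.
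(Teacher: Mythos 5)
Your proposal is correct, and your primary argument takes a genuinely different route from the paper. The paper works with the sequential characterization of the limit superior: it introduces the sets $A_i$ (respectively $A$) of subsequential limits of $\phi_i$ (respectively $\phi$) along sequences $t_n\to t_0$, $t_n\neq t_0$, identifies $\limsup_{t\to t_0}\phi_i(t)=\max A_i$ and $\limsup_{t\to t_0}\phi(t)=\max A$, and then shows $A\subseteq A_1\cup A_2$ by a pigeonhole-type extraction using the nested sets $N_k(\phi_i)=\{n:|\phi_i(t_n)-l|<1/k\}$, from which the inequality follows by taking maxima. Your main argument instead goes straight from the $\inf$--$\sup$ definition: for each $\varepsilon>0$ both $\phi_1$ and $\phi_2$ are bounded by $M+\varepsilon$ on a common punctured neighbourhood of $t_0$, hence so is their pointwise maximum, and letting $\varepsilon\to 0^+$ gives the claim. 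This is shorter, avoids relying on the limit superior being attained as a subsequential limit (the paper's ``$\limsup=\max A_i$'' step, which tacitly uses finiteness and attainment), and deals cleanly with the degenerate case $M=+\infty$. Your alternative sequential sketch is essentially the paper's proof, with the extraction done more transparently: choosing for each $n$ an index $j_n$ with $\phi(s_n)=\phi_{j_n}(s_n)$ and passing to a constant-index subsequence replaces the paper's bookkeeping with the sets $N_k(\phi_i)$, and it makes explicit the point the paper leaves implicit, namely that some index must recur infinitely often.
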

\begin{proof}
By the definition and properties of limit supremum of functions, since $t_0> 0$ and $\limsup\limits_{t\rightarrow t_0}\phi_i(t)$ exists, the set$$A_i= \left\{l\in \mathbb{R}: \exists \{t_n\}, t_n> 0, t_n\rightarrow t_0, t_n\neq t_0\text{ for all }n> 0, \text{ such that }\phi_i(t_n)\rightarrow l \right\}$$ is non empty, and also $\limsup\limits_{t\rightarrow t_0}\phi_i(t)= \max A_i$ for $i= 1, 2$.\\
Since $\limsup\limits_{t\rightarrow t_0}\phi_i(t)$ exists, we have $\limsup\limits_{t\rightarrow t_0}\phi(t)$ also exists. Now we define
$$A= \left\{l\in \mathbb{R}: \exists \{t_n\}, t_n> 0, t_n\rightarrow t_0, t_n\neq t_0\text{ for all }n> 0, \text{ such that }\phi_i(t_n)\rightarrow l \right\}.$$ Then we have $A$ is nonempty and $\limsup\limits_{t\rightarrow t_0}\phi_i(t)= \max A$.\\
Let $l\in A$. Then by definition, there exists a sequence $\{t_n\}$ in $(0, \infty)$ with $t_n\rightarrow t_0, t_n\neq t_0 \text{ for all }n$ such that $\phi(t_n)\rightarrow l$. Thus for $\epsilon> 0$ there exists $N\in\mathbb{N}$ such that $$\left|\phi(t_n)- l\right|< \epsilon\ \ \text{whenever }n\geq N.$$
That is, $\left|\max\left\{\phi_1(t_n), \phi_2(t_n)\right\}- l\right|< \epsilon\ \ \text{ whenever }n\geq N.$ 
For $k\in \mathbb{N}$, define
\[N_k(\phi_i)= \left\{n\in\mathbb{N}: \left|\phi_i(t_n)- l\right|<\frac{1}{k}\right\} \ \ \ \text{ for } i= 1, 2.
\]
For each $k$, either $N_k(\phi_1)\text{ or }N_k(\phi_2)$ is nonempty. Also, $N_{k+1}(\phi_i)\subseteq N_k(\phi_i)$ for each $k$ and $i= 1, 2$. Thus for at least one $i= 1, 2$, we get $N_k(\phi_i)\neq\emptyset$ for all $k$ . Without loss of generality, assume that $N_k(\phi_1)\neq\emptyset$ for all $k$. Then by picking $n_k\in N_k(\phi_1)$, we get a subsequence $\{t_{n_k}\}$ of $\{t_n\}$ such that $\left|\phi_1(t_{n_k})- l\right|<\frac{1}{m}$ for all $k\geq m$. This implies that $\phi_1(t_{n_k})\rightarrow l$. Hence $l\in A_1$.
By this, we can conclude that, if $l\in A$ then $l\in A_1\cup A_2$, which yields that $\limsup\limits_{t\rightarrow t_0}\phi(t)= \max A\leq \max\{\max A_1, \max A_2\}= \max\{\limsup\limits_{t\rightarrow t_0}\phi_1(t), \limsup\limits_{t\rightarrow t_0}\phi_2(t)\}$
\end{proof}
In the light of these Lemmas, we can have the following result.
\begin{theorem}
Let $(X, d), (Y, \rho)$ be two complete metric spaces and $T:X\times Y\rightarrow X$ and $S:X\times Y\rightarrow Y$ be two maps satisfying condition (\ref{eq:3.3}). 
If the functions $\psi, \phi_1\ \text{and }\phi_2$ satisfy the conditions:
\begin{enumerate}[label=(\roman*)]
\item $\psi$ is nondecreasing;
\item $\phi_i(t)< \psi(t)$ for every $t>0$ and $i= 1, 2$;
\item $\limsup\limits_{t\rightarrow\epsilon+}\phi_i(t)< \psi(\epsilon+)$ for every $\epsilon> 0$.\end{enumerate}
Then there exists a unique point $\xi^*= (x^*, y^*)\in X\times Y$ such that,
\[\begin{cases}
x^*= T(x^*, y^*)\\
y^*= S(x^*, y^*)
\end{cases}\]
and the sequences $x_n= T^n(x, y)$ and $y_n= S^n(x, y)$ converge to $x^*$ and $y^*$ respectively for any $(x, y)\in X\times Y$.
\end{theorem}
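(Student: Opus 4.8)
The plan is to reduce this statement to Proinov's theorem (Theorem~\ref{thm:1.1}) applied to the single map $F_{TS}$ on the product space, exactly as was done for the previous two theorems, the only genuinely new ingredient being that the two control functions $\phi_1,\phi_2$ must first be merged into one. So I would set $\phi(t)=\max\{\phi_1(t),\phi_2(t)\}$ for $t>0$ and recall, via Lemma~\ref{lem:3.2}, that the map $F_{TS}:Z\to Z$ given by $F_{TS}(x,y)=(T(x,y),S(x,y))$ is a $(\psi,\phi)$-contraction on the complete metric space $(Z,\mu)$, where $Z=X\times Y$ and $\mu$ is as in~(\ref{eqn:3.1}). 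It then remains only to check that the pair $(\psi,\phi)$ inherits conditions \textit{(i)--(iii)} of Theorem~\ref{thm:1.1} from the hypotheses \textit{(i)--(iii)} imposed here on the triple $(\psi,\phi_1,\phi_2)$.

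Conditions \textit{(i)} and \textit{(ii)} are immediate: $\psi$ is unchanged and assumed nondecreasing, and if $\phi_1(t)<\psi(t)$ and $\phi_2(t)<\psi(t)$ then $\phi(t)=\max\{\phi_1(t),\phi_2(t)\}<\psi(t)$ for every $t>0$. The only condition requiring work is \textit{(iii)}, namely $\limsup_{t\to\epsilon+}\phi(t)<\psi(\epsilon+)$ for every $\epsilon>0$. For this I would invoke a one-sided version of Lemma~\ref{lem:3.3}, asserting that
\[
\limsup_{t\to\epsilon+}\phi(t)\ \le\ \max\Big\{\limsup_{t\to\epsilon+}\phi_1(t),\ \limsup_{t\to\epsilon+}\phi_2(t)\Big\},
\]
and then, since by hypothesis \textit{(iii)} both right-hand $\limsup$s are strictly below $\psi(\epsilon+)$, so is their maximum, giving $\limsup_{t\to\epsilon+}\phi(t)<\psi(\epsilon+)$. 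With \textit{(i)--(iii)} verified, Theorem~\ref{thm:1.1} yields a unique $\xi^*=(x^*,y^*)\in Z$ with $F_{TS}\xi^*=\xi^*$, that is $x^*=T(x^*,y^*)$ and $y^*=S(x^*,y^*)$, together with ${F_{TS}}^n(x,y)\to\xi^*$ for every $(x,y)\in Z$; an easy induction using the recursive definitions of $T^n$ and $S^n$ shows ${F_{TS}}^n(x,y)=(T^n(x,y),S^n(x,y))$, which is exactly the claimed convergence $x_n=T^n(x,y)\to x^*$ and $y_n=S^n(x,y)\to y^*$, and uniqueness of $\xi^*$ gives uniqueness of the coupled fixed point.

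The main obstacle is establishing the displayed one-sided inequality, i.e.\ adapting Lemma~\ref{lem:3.3} from two-sided limits $t\to t_0$ to right-hand limits $t\to\epsilon+$. I expect the argument of Lemma~\ref{lem:3.3} to carry over essentially verbatim once every approximating sequence $\{t_n\}$ is additionally required to satisfy $t_n>\epsilon$: given any $l$ realized as $\lim_n\phi(t_n)=\lim_n\max\{\phi_1(t_n),\phi_2(t_n)\}$ along such a sequence, one of the index sets arising in that proof is infinite, and a diagonal extraction produces a subsequence along which (say) $\phi_1(t_{n_k})\to l$ with $\epsilon<t_{n_k}\to\epsilon$, whence $l\le\limsup_{t\to\epsilon+}\phi_1(t)$; taking the supremum over all admissible $l$ gives the inequality. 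Alternatively one can argue directly: for $\varepsilon>0$ pick $\delta_i>0$ with $\phi_i(t)<\limsup_{t\to\epsilon+}\phi_i(t)+\varepsilon$ on $(\epsilon,\epsilon+\delta_i)$ and set $\delta=\min\{\delta_1,\delta_2\}$ to control $\phi=\max\{\phi_1,\phi_2\}$ on $(\epsilon,\epsilon+\delta)$, then let $\varepsilon\to0$. A minor point worth noting is that, unlike in Lemma~\ref{lem:3.3} where finiteness of the $\limsup$ was assumed, here hypothesis \textit{(iii)} already forces each $\limsup_{t\to\epsilon+}\phi_i(t)<\psi(\epsilon+)$, so no existence or finiteness issue arises and only the inequality above is needed.
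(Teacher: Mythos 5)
Your proposal is correct and follows essentially the same route as the paper: merge the control functions into $\phi=\max\{\phi_1,\phi_2\}$, invoke Lemma~\ref{lem:3.2} to see that $F_{TS}$ is a $(\psi,\phi)$-contraction on $(Z,\mu)$, verify conditions \textit{(i)--(iii)} of Theorem~\ref{thm:1.1} (using the Lemma~\ref{lem:3.3}-type bound on the $\limsup$ of a maximum), and conclude via Theorem~\ref{thm:1.1}. Your extra care in adapting Lemma~\ref{lem:3.3} from two-sided limits $t\to t_0$ to one-sided limits $t\to\epsilon+$ is a welcome refinement of a point the paper glosses over, but it does not change the argument's structure.
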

\begin{proof}
We define a map $F_{TS}:X\times Y\rightarrow X\times Y$ by $F_{TS}(x, y)=\left(T(x, y),S(x, y)\right)$ for $(x, y)\in X\times Y.$ Then by Lemma \ref{lem:3.1}, we have $F_{TS}$ is a $(\psi, \phi)$-contraction where $\phi(t)= \max\{\phi_1(t), \phi_2(t)\}\text{ for }t> 0$.
Since $\phi_i(t)< \psi(t)$ for every $t> 0$ and $i= 1, 2$, we have $\phi(t)< \psi(t)$ for every $t> 0$.
Now, from condition \textit{(ii)} in the hypothesis and Lemma  \ref{lem:3.3}, we get $\limsup\limits_{t\rightarrow\epsilon+}\phi(t)< \psi(\epsilon+)$ for every $\epsilon> 0$.
Hence the functions $\psi$ and $\phi$ satisfy conditions \textit{(i)-(iii)} in the hypothesis of Theorem \ref{thm:1.1}. Then the result follows from the Theorem \ref{thm:1.1} and Lemma \ref{lem:3.2}.
\end{proof}

\section{Iterated Function Systems with $(\psi, \phi)-$contractions}
According to M. F. Barnsley \cite{4}, fractals can be mathematically identified as the fixed points of some set maps.\\
Let $(X, d)$ be a complete metric space and $\mathcal{H}(X)$ denote the set of all nonempty compact subsets of $X$.\\ 
For any $A, B\in \mathcal{H}(X)$, we define distance from the set $A$ to the set $B$ as
\begin{align*}
    d(A, B) &= \max\{d(x, B): x\in A\}\\
            &= \max_{x\in A}\min_{y\in B} d(x, y).
\end{align*}
Now we define the Hausdorff distance in $\mathcal{H}(X)$ as follows:
$$h_d(A, B)= \max\{d(A, B), d(B, A)\}.$$
It can be easily verified that $h_d$ defines a metric on $\mathcal{H}(X)$. Moreover we have the following result:
\begin{lem}\label{lem:4.1} \cite{4}
Let $(X, d)$ be a complete metric space. Then $(\mathcal{H}(X), h_d)$ is a complete metric space. Moreover, if $\{A_n\}$ is a Cauchy sequence in $\mathcal{H}(X)$ then,
$$A= \lim_{n\rightarrow\infty}A_n = \{x\in X: \exists\text{ a Cauchy sequence} \ \{x_n\}\ \text{in}\ X \ \text{such that} \ x_n\in A_n\ and\ \lim\limits_{n\rightarrow\infty}x_n=x\}.$$
\end{lem}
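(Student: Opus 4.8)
The plan is to prove Lemma \ref{lem:4.1} by the classical Barnsley argument, which in one stroke establishes both completeness of $(\mathcal{H}(X),h_d)$ and the displayed description of the limit. Fix a Cauchy sequence $\{A_n\}$ in $(\mathcal{H}(X),h_d)$ and let $A$ denote the set displayed in the statement. I would prove: (a) $A\neq\emptyset$; (b) $A$ is closed in $X$; (c) $A$ is totally bounded; and (d) $h_d(A_n,A)\to 0$. By (b) and (c), $A$ is a closed, totally bounded subset of the complete space $X$, hence compact, so $A\in\mathcal{H}(X)$; together with (d) and the uniqueness of limits in a metric space, this shows that $(\mathcal{H}(X),h_d)$ is complete and that the given Cauchy sequence converges precisely to $A$. (The metric axioms for $h_d$ are routine and already noted; one uses in addition only that for $K\in\mathcal{H}(X)$ the map $x\mapsto d(x,K)$ is continuous and $K$ is compact, so the suprema and infima defining $d(\cdot,\cdot)$ on $\mathcal{H}(X)$ are attained --- this is what converts $h_d$-bounds into statements about individual points, and is used throughout.)

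The technical core is a single construction. Using that $\{A_n\}$ is Cauchy, choose indices $n_1<n_2<\cdots$ with $h_d(A_p,A_q)<2^{-k}$ whenever $p,q\ge n_k$. Starting from an arbitrary $p_1\in A_{n_1}$, pick inductively $p_{k+1}\in A_{n_{k+1}}$ with $d(p_k,p_{k+1})<2^{-k}$, which is possible since $p_k$ lies within distance $h_d(A_{n_k},A_{n_{k+1}})<2^{-k}$ of the compact set $A_{n_{k+1}}$. Then $\{p_k\}$ is Cauchy, so by completeness of $X$ it converges to some $x\in X$ with $d(p_k,x)\le 2^{-k+1}$. Filling in the missing indices --- for $m\ge n_1$ put $k(m):=\max\{j:n_j\le m\}$ and choose $y_m\in A_m$ with $d(y_m,p_{k(m)})<2^{-k(m)}$, and take $y_m\in A_m$ arbitrary for the finitely many $m<n_1$ --- yields a sequence $y_m\in A_m$ with $d(y_m,x)<3\cdot 2^{-k(m)}\to 0$, so $y_m\to x$; hence $x\in A$ and $A\neq\emptyset$, which is (a).

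For (b), let $x_k\in A$ with $d(x_k,x)<2^{-k}$; each $x_k$ carries a sequence $\{y^{(k)}_m\}_m$ with $y^{(k)}_m\in A_m$ and $y^{(k)}_m\to x_k$, hence an index $m_k$ (taken strictly increasing) with $d(y^{(k)}_m,x_k)<2^{-k}$ for $m\ge m_k$. Concatenating, i.e.\ setting $z_m=y^{(k)}_m$ for $m_k\le m<m_{k+1}$ and $z_m$ arbitrary in $A_m$ for $m<m_1$, gives $z_m\in A_m$ with $z_m\to x$, so $x\in A$ and $A$ is closed. For (c), given $\epsilon>0$ pick $N$ with $h_d(A_m,A_N)<\epsilon/3$ for all $m\ge N$ and let $\{a_1,\dots,a_r\}$ be an $\epsilon/3$-net of the compact set $A_N$. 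For $x\in A$, take $m\ge N$ and a representative $y_m\in A_m$ with $d(y_m,x)<\epsilon/3$, then a point of $A_N$ within $\epsilon/3$ of $y_m$, then an $a_i$ within $\epsilon/3$ of that point; this gives $d(x,a_i)<\epsilon$, so $\{a_i\}$ is an $\epsilon$-net of $A$ and $A$ is totally bounded.

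It remains to prove (d). Fix $\epsilon>0$ and $N$ with $h_d(A_p,A_q)<\epsilon/2$ for $p,q\ge N$. To bound $d(A,A_n)$ for $n\ge N$: each $x\in A$ has some $y_m\in A_m\to x$, so choosing $m\ge n$ with $d(y_m,x)<\epsilon/2$ and then (via $h_d(A_m,A_n)<\epsilon/2$) a point of $A_n$ within $\epsilon/2$ of $y_m$ places a point of $A_n$ within $\epsilon$ of $x$. To bound $d(A_n,A)$ for $n\ge N$: given $x_n\in A_n$, rerun the construction of the second paragraph anchored at $w_0=x_n$ along a sufficiently rapidly Cauchy sequence of indices beyond $n$, producing $w_j$ in the $j$-th chosen set with jumps $d(w_{j-1},w_j)$ summable and of total less than $\epsilon$; then $w:=\lim_j w_j$ exists, lies in $A$ (by the same gap-filling as in the second paragraph), and $d(x_n,w)<\epsilon$. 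Hence $h_d(A_n,A)<\epsilon$ for $n\ge N$. The step I expect to be the real obstacle is precisely this last one: the naive choice of a sequence $y_m\in A_m$ converging to a given point is not uniform in that point, so to obtain a single threshold $N$ controlling every point of $A_n$ simultaneously one must replace it by the telescoping geometric-series construction and carry the index bookkeeping (the subsequences $n_k$, the $m_j$, and the gap-filling) through consistently; everything else is a mechanical use of the triangle inequality together with the compactness of each $A_n$.
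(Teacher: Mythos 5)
Your proof is correct and is precisely the classical Barnsley argument: you show the candidate set $A$ is nonempty, closed and totally bounded (hence compact in the complete space $X$), and then obtain $h_d(A_n,A)\to 0$ via the telescoping construction and the gap-filling (extension) step, which is indeed the only delicate point, namely bounding $d(A_n,A)$ uniformly over points of $A_n$. The paper does not prove this lemma at all --- it quotes it from Barnsley \cite{4} --- and your argument coincides with the standard proof in that source, so the two approaches agree.
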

The space $(\mathcal{H}(X), h_d)$ is usually called as the space of fractals.
\begin{lem}\label{lem:4.2} \cite{17,18}
If $\{A_i:i=1,2,\dots n\}, \{B_i: i=1,2,\dots n\}$ be two finite collections in $\mathcal{H}(X)$, then $h_d\left(\bigcup\limits_{i=1}^nA_i, \bigcup\limits_{i=1}^nB_i\right)\leq \max\{h_d(A_i, B_i): i=1,2,\dots n\}$.
\end{lem}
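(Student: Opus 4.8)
The plan is to reduce the statement to the one-sided (directed) distance $d(A,B)=\max_{x\in A}\min_{y\in B}d(x,y)$ that appears in the definition of $h_d$, and first to prove the auxiliary inequality
\[
d\!\left(\bigcup_{i=1}^{n}A_i,\ \bigcup_{i=1}^{n}B_i\right)\ \le\ \max_{1\le i\le n}d(A_i,B_i).
\]
The full claim then follows at once by applying this inequality a second time with the roles of the $A_i$ and the $B_i$ interchanged and taking the larger of the two bounds.

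For the directed inequality I would argue pointwise. First observe that $\bigcup_{i=1}^{n}A_i$ and $\bigcup_{i=1}^{n}B_i$ are again nonempty compact subsets of $X$, since a finite union of compact sets is compact; hence all the distances involved are well defined and the minima and maxima are attained (the function $y\mapsto d(x,y)$ is continuous on a compact set, and $x\mapsto\min_{y\in B}d(x,y)$ is continuous on a compact set). Now fix $x\in\bigcup_{i=1}^{n}A_i$, so that $x\in A_j$ for some index $j$. Because $B_j\subseteq\bigcup_{i=1}^{n}B_i$, enlarging the set over which we minimise can only decrease the distance, so
\[
\min_{y\in\bigcup_{i}B_i}d(x,y)\ \le\ \min_{y\in B_j}d(x,y)\ =\ d(x,B_j)\ \le\ \max_{x'\in A_j}d(x',B_j)\ =\ d(A_j,B_j)\ \le\ \max_{1\le i\le n}d(A_i,B_i).
\]
Taking the maximum over all $x\in\bigcup_{i}A_i$ on the left gives the asserted directed inequality.

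Finally, applying the directed inequality again with the collections swapped yields $d\!\left(\bigcup_i B_i,\bigcup_i A_i\right)\le\max_i d(B_i,A_i)$, whence
\[
h_d\!\left(\bigcup_{i}A_i,\bigcup_{i}B_i\right)=\max\!\left\{d\!\left(\bigcup_{i}A_i,\bigcup_{i}B_i\right),\ d\!\left(\bigcup_{i}B_i,\bigcup_{i}A_i\right)\right\}\le\max_{i}\max\{d(A_i,B_i),d(B_i,A_i)\}=\max_{i}h_d(A_i,B_i),
\]
which is exactly the claim. There is no deep obstacle in this lemma; the only points that genuinely need care are the remark that finite unions stay inside $\mathcal{H}(X)$ so that $h_d$ is meaningful and the maxima/minima are attained, and the monotonicity of $x\mapsto\min_{y\in B}d(x,y)$ under enlarging $B$, which is the single inequality that drives the whole argument. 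If one prefers a more pedestrian route, the same result can be obtained by induction on $n$ starting from the case $n=2$, but the direct pointwise estimate above avoids that bookkeeping.
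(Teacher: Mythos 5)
Your proof is correct: the directed inequality $d\left(\bigcup_i A_i, \bigcup_i B_i\right)\leq \max_i d(A_i,B_i)$ via the pointwise estimate and monotonicity of $\min_{y\in B}d(x,y)$ under enlarging $B$, followed by symmetrization, is exactly the standard argument for this fact. Note that the paper itself gives no proof of this lemma — it is quoted from the cited references — so there is nothing in the paper to compare against; your write-up fills that gap correctly, including the remark that finite unions remain in $\mathcal{H}(X)$.
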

We are now ready to prove some results in fractals.
\begin{lem}\label{lem:4.3}
Let $(X, d)$ be a complete metric space and $w: X\rightarrow X$ be a continuous map satisfying condition (\ref{eq:1.1}) with the functions $\psi, \phi$ which are nondecreasing. Then $\hat{w}:\mathcal{H}(X)\rightarrow\mathcal{H}(X)$ defined by $\hat{w}(A)= w(A)= \bigcup\limits_{a\in A}\{w(a)\}$ for $A\in \mathcal{H}(X)$ also satisfies condition (\ref{eq:1.1}) in $(\mathcal{H}(X), h_d)$.
\end{lem}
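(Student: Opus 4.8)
The plan is to transfer the contractive inequality from $w$ on $X$ to $\hat w$ on $\mathcal{H}(X)$ by using compactness to reduce the Hausdorff distance between images to the image of a single, carefully chosen pair of points. First I would record that $\hat w$ is well defined: since $w$ is continuous and $A\in\mathcal{H}(X)$ is nonempty and compact, the image $w(A)$ is nonempty and compact, so $\hat w(A)=w(A)\in\mathcal{H}(X)$.

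Now fix $A,B\in\mathcal{H}(X)$ with $h_d(\hat w(A),\hat w(B))>0$. By the definition of $h_d$ we may assume, after interchanging $A$ and $B$ if necessary, that $h_d(\hat w(A),\hat w(B))=d(w(A),w(B))>0$. The function $x\mapsto d(w(x),w(B))$ is continuous on the compact set $A$ — it is the composition of the continuous map $w$ with the $1$-Lipschitz function $z\mapsto d(z,w(B))$ — so it attains its maximum at some $x^*\in A$, giving $d(w(A),w(B))=d(w(x^*),w(B))$. Likewise $y\mapsto d(x^*,y)$ attains its minimum on the compact set $B$ at some $\tilde y\in B$, so $d(x^*,\tilde y)=d(x^*,B)$. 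Since $\tilde y\in B$ we have $d(w(x^*),w(B))\le d(w(x^*),w(\tilde y))$; because the left side is positive, so is $d(w(x^*),w(\tilde y))$, hence $x^*\neq\tilde y$, so $d(x^*,B)=d(x^*,\tilde y)>0$ and every quantity appearing below lies in the domain $(0,\infty)$ of $\psi$ and $\phi$. Note also $h_d(A,B)\ge d(A,B)\ge d(x^*,B)>0$.

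Then I would chain the estimates, using in order that $\psi$ is nondecreasing, that $w$ satisfies (\ref{eq:1.1}) applied to the pair $(x^*,\tilde y)$, and that $\phi$ is nondecreasing together with $d(x^*,B)\le d(A,B)\le h_d(A,B)$:
\begin{align*}
\psi\bigl(h_d(\hat w(A),\hat w(B))\bigr) &= \psi\bigl(d(w(x^*),w(B))\bigr)\le \psi\bigl(d(w(x^*),w(\tilde y))\bigr)\\
&\le \phi\bigl(d(x^*,\tilde y)\bigr)=\phi\bigl(d(x^*,B)\bigr)\le \phi\bigl(h_d(A,B)\bigr).
\end{align*}
This is precisely condition (\ref{eq:1.1}) for $\hat w$ in the complete metric space $(\mathcal{H}(X),h_d)$, which finishes the argument.

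The main obstacle is the single inequality $d(w(x^*),w(B))\le d(w(x^*),w(\tilde y))$: the point $\tilde y$ is chosen to be closest to $x^*$ in the domain, not closest to $w(x^*)$ in the range, and it is exactly the monotonicity of $\psi$ that permits replacing the range-optimal distance $d(w(x^*),w(B))$ by the domain-optimal one $d(w(x^*),w(\tilde y))$ so that (\ref{eq:1.1}) becomes applicable; continuity of $w$ and compactness of $A,B$ are what guarantee the relevant maxima and minima are attained. Everything else is bookkeeping to confirm that the distances involved are strictly positive so that $\psi$ and $\phi$, defined only on $(0,\infty)$, can be evaluated.
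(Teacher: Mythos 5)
Your proposal is correct and follows essentially the same route as the paper's proof: reduce $h_d(\hat w(A),\hat w(B))$ to $d(w(a),w(B))$ for a maximizing $a\in A$ via compactness and continuity, compare with $d(w(a),w(b))$ where $b\in B$ is a nearest point to $a$ in the domain metric, apply (\ref{eq:1.1}) to the pair $(a,b)$, and then use monotonicity of $\psi$ and $\phi$ to pass to $\phi\left(h_d(A,B)\right)$, with the symmetric case handled identically. Your additional checks (well-definedness of $\hat w$ and strict positivity of the distances so that $\psi,\phi$ are evaluated inside $(0,\infty)$) are sound bookkeeping that the paper leaves implicit.
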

\begin{proof}
Let $A, B\in \mathcal{H}(X)$ such that $h_d(\hat{w}(A), \hat{w}(B))>0$.
Assume that $h_d\left(\hat{w}(A), \hat{w}(B)\right)= d\left(\hat{w}(A), \hat{w}(B)\right)= \sup\limits_{x\in A}\inf\limits_{y\in B}d\left(w(x), w(y)\right)>0$.
Since $w$ and $d$ are continuous and $A$ is compact, there exist $a\in A$ such that $d\left(\hat{w}(A), \hat{w}(B)\right)= \inf\limits_{y\in B}d\left(w(a), w(y)\right)>0$, which implies that $d\left(w(a), w(y)\right)>0$ for every $y\in B$. Thus for any $y\in B$,
\begin{align*}
    \psi\left(d\left(\hat{w}(A), \hat{w}(B)\right)\right) &= \psi\left(\inf_{y\in B}d\left(w(a), w(y)\right)\right)\\ &\leq \psi\left(d\left(w(a),w(y)\right)\right)\\ &\leq\phi\left(d(a, y)\right).
\end{align*}
Let $b\in B$ be such that $d(a, b)= \inf\limits_{y\in B}d(a, y)$. Since $\phi$ is nondecreasing,
\begin{align*}
    \psi\left(h_d\left(\hat{w}(A), \hat{w}(B)\right)\right) &= \psi\left(d\left(\hat{w}(A), \hat{w}(B)\right)\right)\\
                        &\leq \phi\left(d(a, b)\right)\\
                        &= \phi\left(\inf_{y\in B}d(a, y)\right)\\
                        &\leq \phi\left(\sup_{x\in A}\inf_{y\in B}d(x, y)\right)\\
                        &\leq \phi\left(h_d(A, B)\right).
\end{align*}
On the other hand, if we assume\\ $h_d(\hat{w}(A), \hat{w}(B))= d(\hat{w}(B), \hat{w}(A))= \sup\limits_{x\in B}\inf\limits_{y\in A}d\left(w(x), w(y)\right)>0$, we can proceed as above and get $\psi\left(h_d(\hat{w}(A), \hat{w}(B))\right)\leq \phi\left(h_d(A, B)\right)$. This completes the proof.
\end{proof}
\begin{remark}
The map $\hat{w}$ in Lemma \ref{lem:4.3} is called the fractal operator generated by $w$. Continuity of the map $w$ is required to make sure that $\hat{w}$ maps $\mathcal{H}(X)$ to itself. A fixed point $A^*\in\mathcal{H}(X)$ of the map $\hat{w}$, if it exists, is called an attractor or a self-similar set of $w$.
\end{remark}
\begin{theorem}\label{thm:4.1}
Let $(X, d)$ be a complete metric space and $w:X\rightarrow X$ be a continuous map. If the map $w$ satisfies condition (\ref{eq:1.1}), where the functions $\psi, \phi:(0, \infty)\rightarrow\mathbb{R}$ satisfy the following conditions:
\begin{enumerate}[label=(\roman*)]
\item $\psi, \phi$ are nondecreasing;
\item $\phi(t)<\psi(t)$ for any $t>0$;
\item $\limsup\limits_{t\rightarrow \epsilon+}\phi(t)< \psi(\epsilon+),$\end{enumerate}
then there exists a unique attractor, say $A^*\in \mathcal{H}(X)$, for $w$. Moreover, for any $A\in \mathcal{H}(X)$ the sequence $\{A_n\}$ in $\mathcal{H}(X)$, given by $A_n= w^{n}(A)$, converges to $A^*$.  
\end{theorem}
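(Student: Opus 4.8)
The plan is to reduce Theorem \ref{thm:4.1} to Proinov's fixed point theorem (Theorem \ref{thm:1.1}), applied to the fractal operator $\hat{w}$ on the space of fractals $(\mathcal{H}(X), h_d)$. First I would note that, since $(X,d)$ is complete, Lemma \ref{lem:4.1} guarantees that $(\mathcal{H}(X), h_d)$ is a complete metric space, so it is a legitimate ambient space for Theorem \ref{thm:1.1}. Next, since $w$ is continuous, the image $w(A)$ of a nonempty compact set $A$ is again nonempty and compact, so $\hat{w}(A)=w(A)\in\mathcal{H}(X)$ is well defined and $\hat{w}:\mathcal{H}(X)\to\mathcal{H}(X)$ is a genuine self-map.

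The key step is to invoke Lemma \ref{lem:4.3}: because $w$ is continuous, satisfies (\ref{eq:1.1}), and both $\psi$ and $\phi$ are nondecreasing by hypothesis (i), the operator $\hat{w}$ satisfies (\ref{eq:1.1}) in $(\mathcal{H}(X), h_d)$, that is, $\psi\bigl(h_d(\hat{w}(A),\hat{w}(B))\bigr)\le \phi\bigl(h_d(A,B)\bigr)$ whenever $h_d(\hat{w}(A),\hat{w}(B))>0$. Conditions (i)--(iii) of Theorem \ref{thm:4.1} say precisely that $\psi$ is nondecreasing, that $\phi(t)<\psi(t)$ for every $t>0$, and that $\limsup_{t\to\epsilon+}\phi(t)<\psi(\epsilon+)$ for every $\epsilon>0$; these are exactly hypotheses (i)--(iii) of Theorem \ref{thm:1.1} for the pair $(\psi,\phi)$ on the complete space $(\mathcal{H}(X), h_d)$. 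Applying Theorem \ref{thm:1.1} to $\hat{w}$ then yields a unique $A^*\in\mathcal{H}(X)$ with $\hat{w}(A^*)=A^*$, i.e. a unique attractor of $w$, together with convergence of $\{\hat{w}^n(A)\}$ to $A^*$ for every $A\in\mathcal{H}(X)$.

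Finally I would reconcile the iteration notation: since $\hat{w}\bigl(w^{n-1}(A)\bigr)=w\bigl(w^{n-1}(A)\bigr)=w^n(A)$, a straightforward induction gives $\hat{w}^n(A)=w^n(A)$, so the sequence $A_n=w^n(A)$ appearing in the statement is the Picard sequence of $\hat{w}$, and the claimed convergence $A_n\to A^*$ follows. I do not expect any serious obstacle: the substantive work — that $\hat{w}$ inherits the $(\psi,\phi)$-contraction inequality, where compactness of $A$, continuity of $w$ and of $d$, and the monotonicity of $\psi$ and $\phi$ all enter — has already been carried out in Lemma \ref{lem:4.3}. The only points needing care are the well-definedness of $\hat{w}$ (continuous image of a compact set is compact) and the bookkeeping that matches the three numbered conditions here with those of Theorem \ref{thm:1.1}.
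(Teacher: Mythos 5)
Your proposal is correct and follows essentially the same route as the paper: invoke Lemma \ref{lem:4.3} to show the fractal operator $\hat{w}$ satisfies condition (\ref{eq:1.1}) on the complete space $(\mathcal{H}(X), h_d)$, then apply Theorem \ref{thm:1.1} to obtain the unique fixed point $A^*$ and the convergence of the iterates. The extra details you supply (well-definedness of $\hat{w}$ and the identification $\hat{w}^n(A)=w^n(A)$) are sound and only make explicit what the paper leaves implicit.
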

\begin{proof}
Let $\hat{w}$ be the fractal operator generated by $w$. By Lemma \ref{lem:4.3} it is clear that $\hat{w}$ satisfies condition (\ref{eq:1.1}). Then, conditions \textit{(i)- (iii)} in the hypothesis and Theorem \ref{thm:1.1} guarantee a unique fixed point for $\hat{w}$, say $A^*\in \mathcal{H}(X)$ and moreover, for any $A\in\mathcal{H}(X)$, the sequence $\{A_n\}$, where $A_n= \hat{w}^n(A)$, converges to $A^*$.\\
This proves the theorem.
\end{proof}
Next lemma will prove a result on graph of the function $\hat{w}$.
\begin{lem}\label{lem:4.4}
Let $(X, d)$ be a complete metric space and $w:X\rightarrow X$ be a continuous map. If $\hat{w}$ is the fractal operator generated by $w$, then graph of $\hat{w}$ is closed.
\end{lem}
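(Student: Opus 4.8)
The plan is to prove the slightly stronger statement that $\hat{w}$ is continuous on $(\mathcal{H}(X), h_d)$; since $(\mathcal{H}(X), h_d)$ is a metric (hence Hausdorff) space, continuity of $\hat{w}$ immediately yields that its graph is closed. So I would take $\{A_n\}\subset\mathcal{H}(X)$ with $A_n\to A$ and $\hat{w}(A_n)\to B$ in the Hausdorff metric, and it suffices to show $\hat{w}(A_n)\to\hat{w}(A)$, for then $B=\hat{w}(A)$ by uniqueness of limits.

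First I would establish that $K:=\overline{\bigcup_{n\geq 1}A_n}$ is compact. Given $\epsilon>0$, pick $N$ with $h_d(A_n,A)<\epsilon$ for $n\geq N$; then each such $A_n$ lies in the $\epsilon$-neighbourhood of the compact set $A$, which is totally bounded, while $A_1,\dots,A_{N-1}$ and $A$ are compact, hence totally bounded. Therefore $\bigcup_n A_n$ is totally bounded, and since $X$ is complete, $K$ is compact. Consequently $w|_K$ is uniformly continuous, and $w(A)$, $w(A_n)$ all belong to $\mathcal{H}(X)$ as continuous images of compact sets.

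Next, fix $\epsilon>0$ and choose $\delta>0$ from the uniform continuity of $w$ on $K$, so that $x,y\in K$ with $d(x,y)<\delta$ implies $d(w(x),w(y))<\epsilon$. Pick $N$ with $h_d(A_n,A)<\delta$ for $n\geq N$. For such $n$ and any $a\in A$ there is $a_n\in A_n$ with $d(a,a_n)<\delta$, so $d(w(a),w(a_n))<\epsilon$ and hence $d(w(a),w(A_n))<\epsilon$; symmetrically, for any $a_n\in A_n$ there is $a\in A$ with $d(a_n,a)<\delta$, giving $d(w(a_n),w(A))<\epsilon$. Taking suprema over $A$ and over $A_n$ respectively, $h_d(w(A_n),w(A))\leq\epsilon$ for all $n\geq N$. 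Thus $\hat{w}(A_n)=w(A_n)\to w(A)=\hat{w}(A)$, completing the argument.

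The main obstacle is the compactness of $K$: without it one cannot upgrade the pointwise continuity of $w$ to the uniform control over the sets $A_n$ that the Hausdorff metric requires, and this is exactly where the hypothesis that $w$ is continuous is used. One could alternatively argue directly with the characterisation of Hausdorff limits in Lemma \ref{lem:4.1} (showing $B\subseteq w(A)$ via a subsequence of a choice sequence $a_n\in A_n$ and $w(A)\subseteq B$ by exhibiting a Cauchy sequence converging to each $w(a)$), but the relative compactness of $\bigcup_n A_n$ — equivalently, the existence of convergent subsequences of any such choice sequence — remains the crux.
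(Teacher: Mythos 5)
Your proof is correct, and it takes a genuinely different route from the paper's. You prove the stronger fact that $\hat{w}$ is continuous on $(\mathcal{H}(X),h_d)$: you first get compactness of $K=\overline{\bigcup_n A_n}$ (total boundedness from $A_n\to A$ plus completeness of $X$), then use uniform continuity of $w$ on $K$ to convert $h_d(A_n,A)<\delta$ into $h_d\left(w(A_n),w(A)\right)\leq\epsilon$, and closedness of the graph follows from uniqueness of limits in a metric space. (One small point worth making explicit: $A\subseteq K$, since each $a\in A$ is a limit of points $a_n\in A_n$; this is needed so the uniform-continuity estimate applies to the pairs $(a,a_n)$.) The paper instead argues element-wise through the description of Hausdorff limits in Lemma \ref{lem:4.1}: it shows $w(A)\subseteq B$ directly from continuity of $w$, and for $B\subseteq\hat{w}(A)$ it writes $x=\lim w(y_n)$ with $y_n\in A_n$ and claims that the choice sequence $\{y_n\}$ is itself Cauchy, using only that $\{A_n\}$ is Cauchy in $h_d$. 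Your route buys robustness precisely at that point: a choice sequence $y_n\in A_n$ with $w(y_n)$ convergent need not be Cauchy (it can oscillate between distinct points that $w$ identifies, even for a constant sequence $A_n$), and the paper's intermediate step asserting $d(a,b)<\epsilon/3$ for the auxiliary points $a\in A_m$, $b\in A_n$ does not follow from $h_d(A_n,A_m)<\epsilon/3$; what one really needs is a convergent subsequence of $\{y_n\}$, which is exactly the relative-compactness "crux" you flag at the end and which your compactness of $K$ supplies. Conversely, the paper's approach is more elementary in its ingredients — it invokes only Lemma \ref{lem:4.1} and pointwise continuity, with no appeal to total boundedness or the Heine--Cantor theorem — so each proof has its own merit, but yours closes the inclusion $B\subseteq\hat{w}(A)$ in a way that does not rely on the delicate Cauchy claim.
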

\begin{proof}
Let $(A_n)$ be a sequence in $\mathcal{H}(X)$ such that $A_n\rightarrow A$ and $\hat{w}(A_n)\rightarrow B$. The proof is complete if we prove $B= \hat{w}(A)$. By Lemma \ref{lem:4.1}, we have
$$A= \lim_{n\rightarrow\infty}A_n = \{x\in X: \exists\text{ a Cauchy sequence} \ \{x_n\}\ \text{in}\ X \ \text{such that} \ x_n\in A_n,\ \lim\limits_{n\rightarrow\infty}x_n=x\}$$and
\[B=\lim_{n\rightarrow\infty}\hat{w}(A_n)=\{x\in X: \exists\text{ a Cauchy sequence} \ \{x_n\}\ \text{such that} \ x_n\in \hat{w}(A_n),\  \lim_{n\rightarrow\infty} x_n= x\}.\]
Let $x\in A$, then there exists a Cauchy sequence $\{x_n\}$ in $X$, with $x_n\in A_n$, such that $x_n\rightarrow x$. Since $w$ is continuous, we get $w(x_n)\rightarrow w(x)$. In other words there is a sequence $\left\{w(x_n)\right\}$ where $w(x_n)\in \hat{w}(A_n)$ such that $w(x_n)\rightarrow w(x)$.
Then by definition of $B,\ \ w(x)\in B$, which implies that $w(A)\subseteq B$.
On the other hand, let $x\in B$, then there exists a Cauchy sequence $\{x_n\}$ in $X$ where $x_n\in \hat{w}(A_n)$ such that $x_n\rightarrow x$. Then for each $n$, there exists $y_n\in A_n$ such that $x_n= w(y_n)$ and hence we can write $w(y_n)\rightarrow x$.
Since $(A_n)$ is a Cauchy sequence in $\mathcal{H}(X)$, for $\epsilon> 0$ there exists an $N\in \mathbb{N}$ such that $$h_d(A_n, A_m)< \frac{\epsilon}{3}\ \ \ \text{ for every}\ \  n, m\geq N.$$
Then we have both $d(A_n, A_m)< \frac{\epsilon}{3}$ and $d(A_m, A_n)< \frac{\epsilon}{3}$ for all $n, m\geq N$.
By definition, $d(A_n, A_m)< \frac{\epsilon}{3}$ implies $d(y_n, A_m)< \frac{\epsilon}{3}$ for every $n, m\geq N$.
Since $A_m$ is compact, there exist $a\in A_m$ such that, $d(y_n, a)= d(y_n, A_m)< \frac{\epsilon}{3}$ for every $n\geq N$.
By a similar argument, for $d(A_m, A_n)<\frac{\epsilon}{3}$ we get a $b\in A_n$ such that $d(y_m, b)< \frac{\epsilon}{3}$ for every $m\geq N$.
From the condition $d(A_n, A_m)< \frac{\epsilon}{3}$, we get $d(a, b)< \frac{\epsilon}{3}$. Therefore,
$$d(y_n, y_m)\leq d(y_n,a)+ d(a,b)+ d(b, y_m)< \epsilon\ \ \ \text{for every}\ \ n, m\geq N,$$ which implies that $\{y_n\}$ is a Cauchy sequence in $X$. Then there exists a $y\in X$ such that $y_n\rightarrow y$ as $n\rightarrow \infty$. Thus we obtain $y\in A$. Also, by the continuity of $w$ we get, $w(y_n)\rightarrow w(y)$. This implies that $x= w(y)$. Thus $x\in \hat{w}(A)$, which proves that $B\subseteq \hat{w}(A)$. This completes the proof.
\end{proof}
As an application of Theorem \ref{thm:1.2}, we can have the following result:
\begin{theorem}\label{thm:4.2}
Let $(X, d)$ be a complete metric space and $w:X\rightarrow X$ be a continuous map satisfying the condition (\ref{eq:1.1}), where the nondecreasing functions $\psi, \phi:(0, \infty)\rightarrow\mathbb{R}$ satisfy the following conditions:
\begin{enumerate}[label=(\roman*)]
\item $\phi(t)< \psi(t)$ for any $t>0$;
\item $\inf\limits_{t>\epsilon}\psi(t)>-\infty$ for any $\epsilon>0$;
\item if $\{\psi(t_n)\}$ and $\{\phi(t_n)\}$ are convergent sequences with the same limit  and $\{\psi(t_n)\}$ is strictly decreasing, then $t_n\rightarrow 0$ as $n\rightarrow \infty$;
\item $\limsup\limits_{t\rightarrow \epsilon+}\phi(t)< \liminf\limits_{t\rightarrow \epsilon+}\psi(t)$ for any $\epsilon>0$.\end{enumerate}
Then there exists a unique attractor, say $A^*\in \mathcal{H}(X)$, for $w$. Moreover, for any $A\in \mathcal{H}(X)$ the sequence $\{A_n\}$ in $\mathcal{H}(X)$, given by $A_n= w^{n}(A)$, converges to $A^*$.
\end{theorem}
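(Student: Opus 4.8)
The plan is to mirror the proof of Theorem \ref{thm:4.1}, replacing the appeal to Theorem \ref{thm:1.1} by an appeal to Theorem \ref{thm:1.2}; the one genuinely new ingredient needed for this substitution is the closed-graph property, which is exactly why Lemma \ref{lem:4.4} was proved. First I would let $\hat{w}:\mathcal{H}(X)\to\mathcal{H}(X)$ be the fractal operator generated by $w$. Since $w$ is continuous, $\hat{w}$ is well defined (it carries nonempty compact sets to nonempty compact sets), and since $\psi$ and $\phi$ are nondecreasing and $w$ satisfies (\ref{eq:1.1}), Lemma \ref{lem:4.3} shows that $\hat{w}$ itself satisfies the contractive condition (\ref{eq:1.1}) in $(\mathcal{H}(X), h_d)$. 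By Lemma \ref{lem:4.1}, $(\mathcal{H}(X), h_d)$ is a complete metric space, so $\hat{w}$ is a $(\psi,\phi)$-contraction of a complete metric space into itself.

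Next I would verify the hypotheses of Theorem \ref{thm:1.2} for $\hat{w}$ and the pair $(\psi,\phi)$. Condition \textit{(i)} of Theorem \ref{thm:1.2} (that $\psi$ is nondecreasing) holds by assumption, and conditions \textit{(ii)}, \textit{(iii)}, \textit{(iv)} of Theorem \ref{thm:1.2} are literally conditions \textit{(ii)}, \textit{(iii)}, \textit{(iv)} of the present theorem. For condition \textit{(v)} of Theorem \ref{thm:1.2} it suffices to know that $\hat{w}$ has a closed graph, and this is precisely Lemma \ref{lem:4.4}, which applies because $w$ is continuous. Thus all five hypotheses of Theorem \ref{thm:1.2} are satisfied.

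Applying Theorem \ref{thm:1.2} to $\hat{w}$ on $(\mathcal{H}(X), h_d)$ then yields a unique fixed point $A^*\in\mathcal{H}(X)$ with $\hat{w}(A^*)=A^*$, and moreover $\hat{w}^n(A)\to A^*$ in the Hausdorff metric for every $A\in\mathcal{H}(X)$. Since $\hat{w}^n(A)=w^n(A)=A_n$ by definition of the fractal operator, this gives $A_n\to A^*$, and $A^*$ is by definition the unique attractor (self-similar set) of $w$, completing the proof. I do not expect any real obstacle here; the only point requiring attention — and the reason this is not a verbatim copy of the proof of Theorem \ref{thm:4.1} — is that Theorem \ref{thm:1.2} weakens the behaviour of $\phi$ and $\psi$ near $0$ demanded by Theorem \ref{thm:1.1} only at the cost of a closed-graph assumption, so it is essential to check that $\hat{w}$ (not just $w$) has a closed graph, which is exactly the role played by Lemma \ref{lem:4.4}.
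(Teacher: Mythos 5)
Your proposal is correct and follows essentially the same route as the paper: apply Lemma \ref{lem:4.3} to get the $(\psi,\phi)$-contraction property of $\hat{w}$ on $(\mathcal{H}(X), h_d)$, use Lemma \ref{lem:4.4} (continuity of $w$) to supply the closed-graph alternative in condition \textit{(v)} of Theorem \ref{thm:1.2}, and then invoke Theorem \ref{thm:1.2}. Your write-up is in fact more explicit than the paper's about which hypothesis of Theorem \ref{thm:1.2} each assumption serves, but there is no substantive difference.
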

\begin{proof}
Let $\hat{w}$ be the fractal operator generated by $w$. By Lemma \ref{lem:4.3}, it is clear that $\hat{w}$ satisfies condition (\ref{eq:1.1}). Then, conditions \textit{(i)- (iv)} in the hypothesis along with Lemma \ref{lem:4.4} and Theorem \ref{thm:1.2} guarantee a unique fixed point for $\hat{w}$, say $A^*\in \mathcal{H}(X)$ and moreover for any $A\in\mathcal{H}(X)$, the sequence $\{A_n\}$, where $A_n= \hat{w}^n(A)$, converges to $A^*$.\\
This proves the theorem.
\end{proof}
Now, instead of a single function $w$ on $X$, we consider an iterated function system (IFS) $\{X; w_1, w_2, \cdots, w_N\}$ where $w_i: X\rightarrow X$ for $i=1,2,\cdots, N$ are continuous and satisfies condition (\ref{eq:1.1}). 
The function $W: \mathcal{H}(X)\rightarrow \mathcal{H}(X)$, defined by $W(A)= \bigcup\limits_{i= 1}^N\hat{w_i}(A)$, is called the fractal operator generated by the IFS $\{X; w_1, w_2, \cdots, w_N\}$. A point $A\in\mathcal{H}(X)$ such that $W(A)=\bigcup\limits_{i= 1}^N\hat{w_i}(A)= A$, a fixed point of $W$, is called an attractor of the IFS.\\
Let us consider a more general situation.
\begin{lem}\label{lem:4.5}
Let $(X, d)$ be a complete metric space and  $\{X; w_1, w_2, \cdots, w_N\}$ be an IFS where $w_i: X\rightarrow X$ are continuous maps satisfy the condition: 
\begin{equation}\label{eq:4.1}
\psi(d(w_i(x), w_i(y)))\leq \phi_i(d(x, y)),\end{equation} where $\psi, \phi_i:(0, \infty)\rightarrow \mathbb{R}$ are nondecreasing for  $i=1,2,\cdots, N$.
If $W$ is the fractal operator generated by the IFS, then it satisfies the condition (\ref{eq:1.1}), where $\phi(t)=\max_{1\leq i\leq N}\phi_i(t)$ for $t\in (0, \infty).$ 
\end{lem}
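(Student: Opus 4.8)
The plan is to reduce the claim to Lemma~\ref{lem:4.3}, applied separately to each of the maps $w_i$, and then to glue the resulting estimates together using the union bound in Lemma~\ref{lem:4.2}. The argument is short and the only genuine care needed is in tracking the strict-positivity clause built into condition~(\ref{eq:1.1}).

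First I would observe that, for each fixed $i$, the hypothesis~(\ref{eq:4.1}) together with $\phi_i(t)\le\phi(t)$ shows that $w_i$ satisfies condition~(\ref{eq:1.1}) with the pair $(\psi,\phi_i)$; since $\psi$ and $\phi_i$ are nondecreasing and $w_i$ is continuous, Lemma~\ref{lem:4.3} applies to $w_i$ and yields that the fractal operator $\hat{w_i}$ satisfies $\psi\bigl(h_d(\hat{w_i}(A),\hat{w_i}(B))\bigr)\le\phi_i\bigl(h_d(A,B)\bigr)$ for all $A,B\in\mathcal{H}(X)$ with $h_d(\hat{w_i}(A),\hat{w_i}(B))>0$. (Here continuity of $w_i$ is used exactly as in Lemma~\ref{lem:4.3}, to ensure $\hat{w_i}$ maps $\mathcal{H}(X)$ to itself.)

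Next, fix $A,B\in\mathcal{H}(X)$ with $h_d(W(A),W(B))>0$. Since $W(A)=\bigcup_{i=1}^N\hat{w_i}(A)$ and $W(B)=\bigcup_{i=1}^N\hat{w_i}(B)$, Lemma~\ref{lem:4.2} gives
$$h_d(W(A),W(B))\le\max_{1\le i\le N}h_d(\hat{w_i}(A),\hat{w_i}(B)).$$
Let $j$ be an index at which the maximum on the right is attained. Then $h_d(\hat{w_j}(A),\hat{w_j}(B))\ge h_d(W(A),W(B))>0$, which forces $A\ne B$, hence $h_d(A,B)>0$ and the quantities $\phi_j(h_d(A,B))$ and $\phi(h_d(A,B))$ are well defined. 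Using that $\psi$ is nondecreasing, the displayed inequality propagates through $\psi$, and combining it with the estimate for $\hat{w_j}$ from the previous step,
$$\psi\bigl(h_d(W(A),W(B))\bigr)\le\psi\bigl(h_d(\hat{w_j}(A),\hat{w_j}(B))\bigr)\le\phi_j\bigl(h_d(A,B)\bigr)\le\phi\bigl(h_d(A,B)\bigr),$$
which is precisely condition~(\ref{eq:1.1}) for $W$ in the complete metric space $(\mathcal{H}(X),h_d)$ with $\phi=\max_{1\le i\le N}\phi_i$.

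The single point that requires attention — and the only real obstacle — is the bookkeeping around the strict-positivity hypotheses: one must check that whenever $h_d(W(A),W(B))>0$, the maximizing index $j$ actually satisfies $h_d(\hat{w_j}(A),\hat{w_j}(B))>0$, so that Lemma~\ref{lem:4.3} may legitimately be invoked for $\hat{w_j}$, and that $h_d(A,B)>0$, so that $\phi_j$ and $\phi$ (defined only on $(0,\infty)$) can be evaluated. As indicated above, both follow at once from the chain $h_d(\hat{w_j}(A),\hat{w_j}(B))\ge h_d(W(A),W(B))>0$, so no substantial difficulty arises and the proof reduces to these few routine lines.
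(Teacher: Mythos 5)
Your proof is correct and takes essentially the same route as the paper: apply Lemma~\ref{lem:4.3} to each $w_i$ with the pair $(\psi,\phi_i)$, bound $h_d(W(A),W(B))$ via Lemma~\ref{lem:4.2}, pick a maximizing index $j$, and conclude by monotonicity of $\psi$ together with $\phi_j\le\phi$. The only difference is that you explicitly track the strict-positivity clauses of condition~(\ref{eq:1.1}), which the paper's proof leaves implicit; this is a minor sharpening, not a different argument.
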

\begin{proof}
Let $A,B \in\mathcal{H}(X)$. Then, we have $\psi\left(h_d\left(\hat{w_i}(A), \hat{w_i}(B)\right)\right)\leq \phi_i\left(h_d(A, B)\right)$ for $i= 1, 2, \cdots, N$.
By Lemma \ref{lem:4.2}, we have 
\begin{align*}
    0\leq h_d\left(W(A), W(B)\right) &= h_d\left(\bigcup\limits_{i=1}^N\hat{w_i}(A), \bigcup\limits_{i=1}^N\hat{w_i}(B)\right) \\
    &\leq \max_{1\leq i\leq N}h_d\left(\hat{w_i}(A), \hat{w_i}(B)\right)\\
    &= h_d\left(\hat{w_j}(A), \hat{w_j}(B)\right),
\end{align*} for some $j\in \{1, 2,\cdots N\}$. Since both $\psi$ and $\phi_j$ are nondecreasing, we get
\begin{align*}
    \psi\left(h_d\left(W(A), W(B)\right)\right) &\leq \psi\left(h_d\left(\hat{w_j}(A), \hat{w_j}(B)\right)\right)\\
                       &\leq \phi_j\left(h_d(A, B)\right)\\
                       &\leq \phi\left(h_d(A, B)\right).
\end{align*}
Hence the proof.
\end{proof}
As an immediate consequence of this Lemma along with Theorem  \ref{thm:1.1}, we have the following result:
\begin{theorem}\label{thm:4.3}
Let $(X, d)$ be a complete metric space and  $\{X; w_1, w_2, \cdots, w_N\}$ be an IFS where $w_i: X\rightarrow X$ are continuous maps satisfying the condition (\ref{eq:4.1}). If the maps $\psi, \phi_i$, for $i= 1, 2, \cdots, N$, are nondecreasing and satisfy the conditions:
\begin{enumerate}[label=(\roman*)]
\item $\phi_i(t)< \psi(t)$ for every $t>0$;
\item $\limsup\limits_{t\rightarrow\epsilon+}\phi_i(t)< \psi(\epsilon+)$ for every $\epsilon>0$,\end{enumerate}
then there exists a unique attractor, $A^*\in \mathcal{H}(X)$, for the IFS. Moreover, for any $A\in \mathcal{H}(X)$, the iterated sequence $A_n= W^n(A)$ converges to $A^*$. 
\end{theorem}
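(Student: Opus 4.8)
The plan is to reduce the statement to Proinov's theorem (Theorem \ref{thm:1.1}) applied to the fractal operator $W$ on the complete metric space $(\mathcal{H}(X), h_d)$, mirroring exactly the argument already used for the single-map case in Theorem \ref{thm:4.1}. Recall that $(\mathcal{H}(X), h_d)$ is complete by Lemma \ref{lem:4.1}, so it is a legitimate arena for Theorem \ref{thm:1.1}.

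First I would note that $W$ is a well-defined self-map of $\mathcal{H}(X)$: each $w_i$ is continuous, hence $\hat{w_i}$ sends $\mathcal{H}(X)$ into itself (as in Lemma \ref{lem:4.3}), and a finite union of nonempty compact sets is nonempty compact, so $W(A) = \bigcup_{i=1}^N \hat{w_i}(A) \in \mathcal{H}(X)$. Next, by Lemma \ref{lem:4.5} the operator $W$ satisfies the contractive condition (\ref{eq:1.1}) in $(\mathcal{H}(X), h_d)$ with the function $\phi(t) = \max_{1 \le i \le N} \phi_i(t)$. It then remains to check that the pair $(\psi, \phi)$ satisfies conditions (i)--(iii) of Theorem \ref{thm:1.1}. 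Condition (i) is immediate since $\psi$ is nondecreasing by hypothesis. For (ii), hypothesis (i) of the present theorem gives $\phi_i(t) < \psi(t)$ for every $t > 0$ and each $i$, so taking the maximum over $i$ yields $\phi(t) < \psi(t)$ for every $t > 0$. For (iii), I would invoke Lemma \ref{lem:3.3} in its one-sided form at $\epsilon+$ to obtain $\limsup_{t\to\epsilon+}\phi(t) \le \max_{1\le i\le N}\limsup_{t\to\epsilon+}\phi_i(t)$, and then hypothesis (ii) of the present theorem gives $\max_{1\le i\le N}\limsup_{t\to\epsilon+}\phi_i(t) < \psi(\epsilon+)$; chaining these gives $\limsup_{t\to\epsilon+}\phi(t) < \psi(\epsilon+)$ for every $\epsilon > 0$. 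Applying Theorem \ref{thm:1.1} to $W$ then produces a unique fixed point $A^* \in \mathcal{H}(X)$, i.e. the unique attractor of the IFS, with $W^n(A) \to A^*$ for every $A \in \mathcal{H}(X)$, which is precisely the convergence $A_n = W^n(A) \to A^*$ asserted.

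The step I expect to require the most care is the verification of condition (iii): Lemma \ref{lem:3.3} as stated is phrased for the two-sided $\limsup_{t\to t_0}$ and presupposes the existence of the individual limit suprema, whereas here we need the $\epsilon+$ version. I would handle this either by first observing that each $\limsup_{t\to\epsilon+}\phi_i(t)$ exists and is finite --- it is bounded above by $\psi(\epsilon+)$, which is finite because $\psi$ is nondecreasing hence locally bounded --- and then quoting the one-sided analogue of Lemma \ref{lem:3.3}, or by reproving the short set-theoretic argument of Lemma \ref{lem:3.3} verbatim with "$t_n \to t_0$" replaced by "$t_n \to \epsilon+$". Everything else in the proof is a routine transcription of the reasoning already recorded for Theorems \ref{thm:4.1} and \ref{thm:4.2}, now combined with the "$\max$ of finitely many $\phi_i$" device supplied by Lemmas \ref{lem:4.5} and \ref{lem:3.3}.
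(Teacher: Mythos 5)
Your proposal is correct and follows essentially the same route as the paper: define $\phi(t)=\max_{1\le i\le N}\phi_i(t)$, invoke Lemma \ref{lem:4.5} to get condition (\ref{eq:1.1}) for $W$ on $(\mathcal{H}(X),h_d)$, verify conditions (i)--(iii) of Theorem \ref{thm:1.1} using a one-sided version of the argument in Lemma \ref{lem:3.3}, and conclude via Theorem \ref{thm:1.1}. If anything, your write-up is slightly more careful than the paper's (strictness of $\phi(t)<\psi(t)$ from the finite maximum, and the explicit treatment of the $\epsilon+$ limit superior), but the underlying argument is the same.
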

\begin{proof}
Define a map $\phi:(0, \infty)\rightarrow\mathbb{R}$ such that $\phi(t)= \max_{1\leq i \leq N}\phi_i(t)$ for $t\in (0, \infty)$. Then from condition \textit{(i)}
 in the hypothesis, it is clear that, $$\phi(t)= \max_{1\leq i \leq N}\phi_i(t)\leq \psi(t), \ \ \text{for every}\ t>0.$$
 Now, from condition \textit{(ii)} in the hypothesis and a similar argument in Lemma \ref{lem:3.3} we get, $\limsup\limits_{t\rightarrow\epsilon+}\phi(t)< \psi(\epsilon+)$ for every $\epsilon> 0$.
 Thus the functions $\psi$ and $\phi$ satisfies conditions \textit{(i)- (iii)} of Theorem \ref{thm:1.1}. Then the proof follows immediately from Theorem \ref{thm:1.1} along with Lemma \ref{lem:4.5}.
\end{proof}
Let us now consider an example:
\begin{eg}
Consider the complete metric space $\mathbb{R}$ with Euclidean metric. We define two maps $w_1, w_2:\mathbb{R}\rightarrow\mathbb{R}$ as:
\[w_1(x)=\begin{cases}
\frac{2}{3}x &\text{if } x\geq 0\\
\frac{-2}{3}x &\text{if } x< 0
\end{cases}\ \ \ \text{and}\ \ \ 
w_2(x)=\begin{cases}
\frac{1}{3}x+\frac{2}{3} &\text{if }x\geq 0\\
\frac{-1}{3}x+\frac{2}{3} &\text{if }x< 0
\end{cases}
\]
Define three functions $\psi, \phi_1, \phi_2:(0, \infty)\rightarrow\mathbb{R}$ as:
\[\psi(t)=\begin{cases}
2t &\text{if }0< t\leq 1\\
3t &\text{if } t> 1
\end{cases},\ 
\phi_1(t)=\begin{cases}
\frac{3}{2}t &\text{if }0< t\leq 1\\
2t &\text{if } t> 1
\end{cases}\ \ and\ \
\phi_2(t)= \begin{cases}
t &\text{if }0< t\leq 1\\
\frac{5}{2}t &\text{if } t> 1
\end{cases}
\]
It can be easily observed that the maps $w_1$ and $w_2$ are continuous and $\psi, \phi_1$ and $\phi_2$ are nondecreasing and satisfy the conditions \textit{(i)} and \textit{(ii)} of Theorem  \ref{thm:4.3}.\\
Consider the maps $w_1, \psi \ \text{and }\phi_1$. Let $x, y\in \mathbb{R}$\\
\textbf{Case 1}: For $x, y\geq 0$ or $x, y< 0$, we get
\[\psi\left(d\left(w_1(x), w_1(y)\right)\right)=\psi\left(\left|\frac{2}{3}x-\frac{2}{3}y\right|\right)=\begin{cases}
\frac{4}{3}\left|x- y\right| &\text{if } \left|x- y\right|\leq 1\\
2\left|x- y\right| &\text{if } \left|x- y\right|> 1.
\end{cases}
\]
\textbf{Case 2}: If $x\geq 0$ and $y< 0$ we have
\[\psi\left(d\left(w_1(x), w_1(y)\right)\right)=\psi\left(\left|\frac{2}{3}x+\frac{2}{3}y\right|\right)=\begin{cases}
\frac{4}{3}\left|x+ y\right| &\text{if } \left|x+ y\right|\leq 1\\
2\left|x+ y\right| &\text{if } \left|x+ y\right|> 1.
\end{cases}
\]
Now, for any $x, y\in \mathbb{R}$ we have
\[\phi_1\left(d(x, y)\right)=\begin{cases}
\frac{3}{2}\left|x- y\right| &\text{if } \left|x- y\right|\leq 1\\
2\left|x- y\right| &\text{if } \left|x- y\right|> 1.
\end{cases}
\]Thus from the above cases it is clear that, for any $x, y\in \mathbb{R}$, $$\psi\left(d\left(w_1(x), w_2(y)\right)\right)\leq \phi_1\left(d(x, y)\right).$$
Similarly, if we consider the maps $w_2, \psi \ \text{and }\phi_2$ we can have the following cases.\\
\textbf{Case 1}: For $x, y\geq 0$ or $x, y< 0$, we have
\[\psi\left(d\left(w_2(x), w_2(y)\right)\right)=\psi\left(\left|\frac{1}{3}x-\frac{1}{3}y\right|\right)=\begin{cases}
\frac{2}{3}\left|x- y\right| &\text{if } \left|x- y\right|\leq 1\\
\left|x- y\right| &\text{if } \left|x- y\right|> 1.
\end{cases}
\]
\textbf{Case 2}: If $x\geq 0$ and $y< 0$, we get
\[\psi\left(d\left(w_2(x), w_2(y)\right)\right)=\psi\left(\left|\frac{1}{3}x+\frac{1}{3}y\right|\right)=\begin{cases}
\frac{2}{3}\left|x+ y\right| &\text{if } \left|x+ y\right|\leq 1\\
\left|x+ y\right| &\text{if } \left|x+ y\right|> 1.
\end{cases}
\]
Now, for any $x, y\in \mathbb{R}$ we have
\[\phi_2\left(d(x, y)\right)=\begin{cases}
\left|x- y\right| &\text{if }\left|x- y\right|\leq 1\\
\frac{5}{2}\left|x- y\right| &\text{if } \left|x- y\right|>1.
\end{cases}
\]
Here also, we can observe that, for any $x, y\in \mathbb{R}$, $$\psi\left(d\left(w_2(x), w_2(y)\right)\right)\leq \phi_2\left(d\left(x, y\right)\right).$$
Thus the IFS $\{\mathbb{R}; w_1, w_2\}$ along with the maps $\psi, \phi_1\text{ and }\phi_2$ satisfies the condition (\ref{eq:4.1}) and the hypothesis of Theorem \ref{thm:4.3}. Hence by Theorem \ref{thm:4.3}, the map $W:\mathcal{H}(\mathbb{R})\rightarrow\mathcal{H}(\mathbb{R})$ defined by $W(A)=w_1(A)\cup w_2(A)$ satisfies condition (\ref{eq:1.1}) with the functions $\psi$ and $\phi= \max\{\phi_1, \phi_2\}$. Also we can observe that $A=[0, 1]$ is the unique attractor of this IFS. We have $w_1([0, 1])=[0, \frac{2}{3}]$ and $w_2([0, 1])= [\frac{2}{3}, 1]$. Hence, $W([0, 1])= w_1([0, 1])\cup w_2([0, 1])= [0, 1]$. 
\end{eg}
\section{Applications to Coupled Fractals}
In this section we will discuss about the existence and uniqueness of coupled self- similar sets for jointly $(\psi, \phi)$-contractions.
\begin{theorem}\label{thm:5.1}
Let $(X, d)$ be a complete metric space and $w:X\times X\rightarrow X$ be a continuous map satisfying condition (\ref{eq:2.1}) where the functions $\psi, \phi: (0, \infty)\rightarrow \mathbb{R}$ satisfy the following conditions:
\begin{enumerate}[label=(\roman*)]
\item $\psi, \phi$ are nondecreasing;
\item $\phi(t)< \psi(t)$ for every $t>0$;
\item $\limsup\limits_{t\rightarrow\epsilon+}\phi(t)< \psi(\epsilon+)$ for every $\epsilon>0$.\end{enumerate} Then there exists a unique element $(A^*, B^*)\in \mathcal{H}(X)\times\mathcal{H}(X)$ such that 
\[\begin{cases}
  A^*= w(A^*, B^*) \\
  B^*= w(B^*, A^*)
\end{cases}\] 
Moreover, for any element $(A, B)\in \mathcal{H}(X)\times\mathcal{H}(X)$, the sequences
\[\begin{cases}
    A^n= w^n(A, B)\\
    B^n= w^n(B, A)
\end{cases}\]
converge to $A^*$ and $B^*$ respectively.
\end{theorem}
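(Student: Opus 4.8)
The plan is to lift $w$ to a set-valued operator on $\mathcal{H}(X)\times\mathcal{H}(X)$ and then quote Theorem~\ref{thm:2.1}. First I would set $\hat{w}:\mathcal{H}(X)\times\mathcal{H}(X)\to\mathcal{H}(X)$ by $\hat{w}(A,B)=w(A\times B)=\bigcup_{a\in A,\,b\in B}\{w(a,b)\}$. Since $w$ is continuous and $A\times B$ is a compact subset of $X\times X$, the set $\hat{w}(A,B)$ is nonempty and compact, so $\hat{w}$ is well defined; moreover $\hat{w}(A,B)$ coincides with the set $w(A,B)$ in the statement, so a coupled fixed point of $\hat{w}$ in $\mathcal{H}(X)\times\mathcal{H}(X)$ is precisely a pair $(A^*,B^*)$ as required.

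Next, and this is the main point, I would show that $\hat{w}$ satisfies condition (\ref{eq:2.1}) on the complete metric space $(\mathcal{H}(X),h_d)$ (complete by Lemma~\ref{lem:4.1}); this is the coupled analogue of Lemma~\ref{lem:4.3}. Take $(A,B),(C,D)\in\mathcal{H}(X)\times\mathcal{H}(X)$ and suppose $h_d(\hat{w}(A,B),\hat{w}(C,D))>0$. By symmetry in the two directions of the Hausdorff metric, assume $h_d(\hat{w}(A,B),\hat{w}(C,D))=\sup_{p\in\hat{w}(A,B)}\inf_{q\in\hat{w}(C,D)}d(p,q)$. The map $(a,b)\mapsto d\big(w(a,b),\hat{w}(C,D)\big)$ is continuous on the compact set $A\times B$, so its supremum is attained at some $(a_0,b_0)$, and positivity of the Hausdorff distance forces $d\big(w(a_0,b_0),w(c,d)\big)>0$ for all $c\in C$, $d\in D$. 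Pick $c_0\in C$ and $d_0\in D$ with $d(a_0,c_0)=d(a_0,C)$ and $d(b_0,d_0)=d(b_0,D)$ (possible since $C,D$ are compact). Then $\max\{d(a_0,c_0),d(b_0,d_0)\}>0$, for otherwise $w(a_0,b_0)=w(c_0,d_0)$; hence condition (\ref{eq:2.1}) for $w$ applies to the pair $(a_0,b_0),(c_0,d_0)$, and using that $\psi$ and $\phi$ are nondecreasing together with $d(a_0,c_0)\le h_d(A,C)$ and $d(b_0,d_0)\le h_d(B,D)$,
\begin{align*}
\psi\big(h_d(\hat{w}(A,B),\hat{w}(C,D))\big)
&=\psi\Big(\inf_{c\in C,\,d\in D} d\big(w(a_0,b_0),w(c,d)\big)\Big)\\
&\le\psi\big(d(w(a_0,b_0),w(c_0,d_0))\big)\\
&\le\phi\big(\max\{d(a_0,c_0),d(b_0,d_0)\}\big)\\
&\le\phi\big(\max\{h_d(A,C),h_d(B,D)\}\big).
\end{align*}
The reverse-direction case is identical after interchanging $(A,B)$ with $(C,D)$, so $\hat{w}$ satisfies (\ref{eq:2.1}) on $(\mathcal{H}(X),h_d)$.

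Finally, conditions (i)--(iii) of the present theorem on $\psi$ and $\phi$ are precisely conditions (i)--(iii) of Theorem~\ref{thm:2.1}, so applying Theorem~\ref{thm:2.1} with $(\mathcal{H}(X),h_d)$ in place of $(X,d)$ and $\hat{w}$ in place of $T$ yields a unique $(A^*,B^*)\in\mathcal{H}(X)\times\mathcal{H}(X)$ with $A^*=\hat{w}(A^*,B^*)=w(A^*,B^*)$ and $B^*=\hat{w}(B^*,A^*)=w(B^*,A^*)$, and the iterates $A^n=\hat{w}^n(A,B)$, $B^n=\hat{w}^n(B,A)$ converge to $A^*$, $B^*$ for every $(A,B)$. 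The step I expect to be the real work is the verification of (\ref{eq:2.1}) for $\hat{w}$: one has to exploit continuity of $w$ and compactness of $A\times B$ to realize the defining supremum of the Hausdorff distance, check every positivity hypothesis needed to legitimately invoke (\ref{eq:2.1}) and to evaluate $\phi$ at a positive argument, and handle both directions of $h_d$ --- in short, re-run Lemma~\ref{lem:4.3} carrying an extra coordinate.
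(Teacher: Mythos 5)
Your proof is correct, but it takes a genuinely different route from the paper. The paper's proof is a pure composition of earlier results in the opposite order: it first \emph{couples}, passing from $w$ to the self-map $w^*(x,y)=(w(x,y),w(y,x))$ on $(X^*,d^*)$ which satisfies (\ref{eq:1.1}) (as in the proof of Theorem \ref{thm:2.1}), then \emph{fractalizes}, applying Lemma \ref{lem:4.3} to get the fractal operator $W^*$ on $(\mathcal{H}(X^*),h_{d^*})$ and concluding by Theorem \ref{thm:4.1}. You instead fractalize first and couple second: you lift $w$ to $\hat{w}(A,B)=w(A\times B)$ on $(\mathcal{H}(X),h_d)$, prove by hand the coupled analogue of Lemma \ref{lem:4.3} (that $\hat{w}$ satisfies (\ref{eq:2.1}) with respect to $h_d$, using continuity of $w$, compactness of $A\times B$, and monotonicity of $\psi,\phi$), and then invoke Theorem \ref{thm:2.1} on the hyperspace. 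Your verification is sound --- the attainment of the supremum at $(a_0,b_0)$, the positivity checks before evaluating $\psi$ and $\phi$, and the estimates $d(a_0,c_0)\le h_d(A,C)$, $d(b_0,d_0)\le h_d(B,D)$ are all in order, and restricting to the case where the relevant Hausdorff distance is positive mirrors exactly how the paper itself handles Lemma \ref{lem:4.3} and how (\ref{eq:2.1}) is actually used in the proof of Theorem \ref{thm:2.1}. What each approach buys: the paper's argument is shorter because it cites three ready-made results with no new estimate, but its fixed point is a single compact invariant set of $w^*$ in $\mathcal{H}(X\times X)$, and the identification of that object with a pair $(A^*,B^*)\in\mathcal{H}(X)\times\mathcal{H}(X)$ satisfying $A^*=w(A^*,B^*)$ and $B^*=w(B^*,A^*)$ (with $w(A,B)$ read as the image of $A\times B$) is left implicit; since an invariant set of $w^*$ need not be a product, that step is not automatic. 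Your route costs one explicit lemma-level verification but delivers the conclusion literally in the form stated, including the natural coupled set-iterates $A^n,B^n$, so it is arguably the more faithful proof of this particular statement.
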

\begin{proof}
We define the operator $w^*: X^*\rightarrow X^*$ such that $w^*(x, y)= \left(w(x, y), w(y, x)\right)$. By Theorem \ref{thm:2.1}, it is clear that $w^*$ satisfies the condition (\ref{eq:1.1}) in the complete metric space $(X^*, d^*)$. Let $W^*:\mathcal{H}(X^*)\rightarrow\mathcal{H}(X^*)$ be the fractal operator generated by $w^*$. Then Theorem \ref{lem:4.3} tells that the operator $W^*$ satisfies condition (\ref{eq:1.1}) in the complete metric space $(\mathcal{H}(X^*), h_{d^*})$. Hence the result follows from Theorem \ref{thm:4.1}.
\end{proof}
Our next result deals with existence of coupled self- similar set for an IFS.
\begin{theorem}
Let $w_i:X\times X\rightarrow X$ be continuous and jointly $(\psi, \phi_i)$-contractions for $i=1, 2, \cdots, N$ and $\psi, \phi_i$ satisfy the following conditions:
\begin{enumerate}[label=(\roman*)]
\item $\psi, \phi_i$ are nondecreasing for $i= 1, 2, \cdots, N$;
\item $\phi_i(t)< \psi(t)$ for every $t>0$;
\item $\limsup\limits_{t\rightarrow\epsilon+}\phi_i(t)< \psi(\epsilon+)$ for every $\epsilon>0$.\end{enumerate}
Then there exists a unique pair $(A^*, B^*)\in\mathcal{H}(X)\times\mathcal{H}(X)$ such that 
\[\begin{cases}
   A^*= \bigcup\limits_{i=1}^N w_i(A^*, B^*)\\
   B^*= \bigcup\limits_{i=1}^N w_i(B^*, A^*).
\end{cases}\]
Moreover, for any $(A, B)\in \mathcal{H}(X)\times\mathcal{H}(X)$, the sequences
\[\begin{cases}
    A_n= \bigcup\limits_{i=1}^N w_i(A, B)\\
    B_n= \bigcup\limits_{i=1}^N w_i(B, A)
\end{cases}\]
converge to $A^*$ and $B^*$ respectively.
\end{theorem}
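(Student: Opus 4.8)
The plan is to continue the pattern of Theorem~\ref{thm:5.1}, replacing the single contraction by the family $\{w_i\}$ and the single-map fractal theorem (Theorem~\ref{thm:4.1}) by its IFS version (Theorem~\ref{thm:4.3}). Work on $X^{*}=X\times X$ with the maximum metric $d^{*}$, which is complete, and for each $i$ set $w_i^{*}\colon X^{*}\to X^{*}$, $w_i^{*}(x,y)=\bigl(w_i(x,y),w_i(y,x)\bigr)$.

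First I would lift the hypotheses to $X^{*}$. Each $w_i^{*}$ is continuous, since $w_i$ is and $d^{*}$ is the $\max$ metric. Moreover, repeating the two-case computation in the proof of Theorem~\ref{thm:2.1} with $T=w_i$ and $\phi=\phi_i$: for $z=(x,y)$, $w=(u,v)$ with $d^{*}(w_i^{*}z,w_i^{*}w)>0$, the value $d^{*}(w_i^{*}z,w_i^{*}w)$ equals $d(w_i(x,y),w_i(u,v))$ or $d(w_i(y,x),w_i(v,u))$, and in either case the jointly $(\psi,\phi_i)$-contraction inequality (\ref{eq:2.1}) for $w_i$ gives $\psi\bigl(d^{*}(w_i^{*}z,w_i^{*}w)\bigr)\le\phi_i\bigl(d^{*}(z,w)\bigr)$. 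Thus $\{X^{*};w_1^{*},\dots,w_N^{*}\}$ is an IFS of continuous maps satisfying condition (\ref{eq:4.1}) on the complete metric space $(X^{*},d^{*})$.

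Then I would apply Theorem~\ref{thm:4.3} directly. By hypothesis $\psi,\phi_1,\dots,\phi_N$ are nondecreasing and satisfy $\phi_i(t)<\psi(t)$ and $\limsup_{t\to\epsilon+}\phi_i(t)<\psi(\epsilon+)$ for all $t,\epsilon>0$, i.e.\ exactly conditions (i)--(ii) of Theorem~\ref{thm:4.3}. Hence there is a unique attractor $C^{*}\in\mathcal{H}(X^{*})$ with $W^{*}(C^{*})=C^{*}$, where $W^{*}=\bigcup_{i=1}^{N}\widehat{w_i^{*}}$, and ${W^{*}}^{n}(C)\to C^{*}$ in $h_{d^{*}}$ for every $C\in\mathcal{H}(X^{*})$. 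Finally, as at the end of the proof of Theorem~\ref{thm:5.1}, I would record $C^{*}$ through its two coordinate sets $A^{*},B^{*}\in\mathcal{H}(X)$; unwinding each $\widehat{w_i^{*}}$ one coordinate at a time turns $W^{*}(C^{*})=C^{*}$ into $A^{*}=\bigcup_{i=1}^{N}w_i(A^{*},B^{*})$ and $B^{*}=\bigcup_{i=1}^{N}w_i(B^{*},A^{*})$, turns the convergence of ${W^{*}}^{n}(C)$ into convergence of the $n$-th coupled iterates to $(A^{*},B^{*})$, and transfers uniqueness of $C^{*}$ to uniqueness of the pair.

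The hard part will be that last translation step: one must fix the convention identifying a pair in $\mathcal{H}(X)\times\mathcal{H}(X)$ with an element of $\mathcal{H}(X\times X)$ and check that $W^{*}$ respects it, since a general compact subset of $X\times X$ is not a product of compacta. A cleaner route that avoids this altogether is to argue directly in $(\mathcal{H}(X),h_d)$: put $\Omega(A,B)=\bigcup_{i=1}^{N}w_i(A,B)$ (which lies in $\mathcal{H}(X)$ since $A\times B$ is compact and each $w_i$ continuous), use Lemma~\ref{lem:4.2} together with a Lemma~\ref{lem:4.3}-style compactness-plus-continuity argument to show $\Omega$ is a jointly $(\psi,\phi)$-contraction on $(\mathcal{H}(X),h_d)$ with $\phi=\max_{1\le i\le N}\phi_i$, verify by Lemma~\ref{lem:3.3} that $(\psi,\phi)$ satisfies conditions (i)--(iii) of Theorem~\ref{thm:2.1}, and conclude from Theorem~\ref{thm:2.1}, which yields exactly the stated $(A^{*},B^{*})\in\mathcal{H}(X)\times\mathcal{H}(X)$ together with the convergence of the coupled iterates.
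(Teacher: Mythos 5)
Your first route is the paper's proof almost verbatim: the paper defines $w_i^*(x,y)=(w_i(x,y),w_i(y,x))$, notes via the proof of Theorem \ref{thm:2.1} that each $w_i^*$ satisfies (\ref{eq:1.1}) with $(\psi,\phi_i)$, invokes Lemma \ref{lem:4.5} for $W^*=\bigcup_{i=1}^N\hat{w_i^*}$ on $\mathcal{H}(X^*)$ with $\phi=\max_i\phi_i$, and concludes by Theorem \ref{thm:4.3} --- stopping exactly at the point you flag. Indeed the paper writes elements of $\mathcal{H}(X^*)$ as pairs $(A,B)$ and never addresses how a unique attractor $C^*\in\mathcal{H}(X\times X)$ produces (and is equivalent to) a unique pair $(A^*,B^*)\in\mathcal{H}(X)\times\mathcal{H}(X)$ solving the coupled equations; as you observe, $\hat{w_i^*}$ does not carry product sets to product sets, so the identification is not automatic, and uniqueness of the pair does not follow formally from uniqueness of $C^*$. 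Your preferred alternative is therefore a genuinely different route, and it is the one that delivers precisely the stated conclusion: set $\Omega(A,B)=\bigcup_{i=1}^N w_i(A,B)$ on $(\mathcal{H}(X),h_d)$ (compact because $A\times B$ is compact and each $w_i$ continuous), prove the two-variable analogue of Lemma \ref{lem:4.3} --- the same compactness-plus-continuity argument, choosing $c\in C$ and $e\in D$ nearest to the maximizing $a$ and $b$ and using that $\psi,\phi_i$ are nondecreasing --- to obtain $\psi\left(h_d\left(w_i(A,B),w_i(C,D)\right)\right)\leq\phi_i\left(\max\{h_d(A,C),h_d(B,D)\}\right)$, combine over $i$ via Lemma \ref{lem:4.2} with $\phi=\max_i\phi_i$, verify conditions \textit{(i)--(iii)} of Theorem \ref{thm:2.1} for $(\psi,\phi)$ (Lemma \ref{lem:3.3} extends to finitely many functions and one-sided limits; for nondecreasing $\phi_i$ one can simply note $\limsup_{t\rightarrow\epsilon+}\phi(t)=\max_i\phi_i(\epsilon+)<\psi(\epsilon+)$), and apply Theorem \ref{thm:2.1} to $\Omega$. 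The paper's route buys brevity and reuse of the Section 4 IFS machinery at the cost of the unaddressed translation step; your route costs a short two-variable version of Lemma \ref{lem:4.3} but keeps the fixed point in $\mathcal{H}(X)\times\mathcal{H}(X)$ from the start, so existence, uniqueness and the convergence of the coupled iterates are exactly what Theorem \ref{thm:2.1} provides, with nothing left open.
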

\begin{proof}
Define $w_i^*: X^*\rightarrow X^*$ by $w_i^*(x,y)= (w_i(x, y), w_i(y, x))$ for $i=1, 2, \cdots, N$. Then from the proof of Theorem \ref{thm:2.1} we can say each map $w_i^*$ satisfies condition (\ref{eq:1.1}) with functions $\psi, \phi_i$ for $i=1, 2, \cdots, N$. Now we define, the fractal operator generated by the IFS $\{X^*; w_1^*, w_2^*, \cdots, w_N^*\}$,  $W^*:\mathcal{H}(X^*)\rightarrow\mathcal{H}(X^*)$ by $W^*(A, B)=\bigcup\limits_{i=1}^N\hat{w_i^*}(A, B)$. From Lemma \ref{lem:4.5}, it is clear that $W^*$ satisfies condition (\ref{eq:1.1}) with $\phi= \max\limits_{1\leq i\leq N}\phi_i$. Then the proof can be completed by using Theorem \ref{thm:4.3}. \end{proof}


\begin{thebibliography}{999}
     \bibitem{1}	B. Alqahtani, S. S. Alzaid, A. Fulga and A. F. R. López de Hierro, Proinov type contractions on dislocated b-metric spaces, \textit{Advances in Difference Equations} 2021 (1) (2021) 1-16.
\bibitem{2}	I. Altun, N. A. Arifi, M. Jleli, A. Lashin and B. Samet, A new concept of $(\alpha, F_d)$-contraction on quasi metric space, \textit{J. Nonlinear Sci. Appl.} 9 (2016) 3354-3361.
\bibitem{3}	M. F. Barnsley, Super Fractals, \textit{Cambridge University Press} (2006).
\bibitem{4}	M. F. Barnsley, Fractals everywhere, \textit{Academic press} (2014).
\bibitem{5} 	D. W. Boyd and J. S. W. Wong, On nonlinear contractions, \textit{Proceedings of the American Mathematical Society} 20(2) (1969) 458-464.
\bibitem{6}	D. Doric, Common fixed point for generalized $(\psi, \phi)$-weak contractions, \textit{Appl. Math. Lett.}  22 (2009) 1896-1900.
\bibitem{7}	M. Imdad, W. M. Alfaqih and I. A. Khan, Weak $\theta$-contractions and some fixed point results with applications to fractal theory, \textit{Advances in Difference Equations} 2018(1) (2018) 1-18.
\bibitem{8}	J. Matkowski, Integrable solutions of functional equations, \textit{Warszawa: Instytut Matematyczny Polskiej Akademi Nauk} (1975).
\bibitem{9}	S. Moradi and A. Farajzadeh, On the fixed point of $(\psi-\phi)$-weak and generalized $(\psi- \phi)$-weak contraction mappings, \textit{Applied Mathematics Letters}  25(10) (2012) 1257-1262.
\bibitem{10}	M. Nazam, C. Park and M. Arshad, Fixed point problems for generalized contractions with applications. \textit{Advances in Difference Equations} 2021(1) (2021) 1-15.
\bibitem{11}	V. Parvaneh, G. G. Branch and G. Gharb, Some common fixed point theorems in complete metric spaces, \textit{Int. J. Pure Appl. Math.} 76(1) (2012) 1-8.
\bibitem{12}	A. Petruşel and A. Soos, Coupled fractals in complete metric spaces, \textit{Nonlinear Analysis: Modelling and Control} 23(2) (2018) 141-158.
\bibitem{13}	A. Petruşel and G. Petruşel, Coupled fractal dynamics via Meir–Keeler operators, \textit{Chaos, Solitons \& Fractals} 122 (2019) 206-212.
\bibitem{14}	A. Petruşel, G. Petruşel and Mu-Ming Wong, Fixed point results for locally contractions with applications to fractals , \textit{Journal of Nonlinear And Convex Analysis} 21(2) (2020) 403-411.
\bibitem{15}	O. Popescu, Fixed points for $(\psi, \phi)$-weak contractions, \textit{Applied Mathematics Letters} 24(1) (2011) 1-4.
\bibitem{16}	O. Popescu, Some remarks on the paper Fixed point theorems for generalized contractive mappings in metric spaces, \textit{Journal of Fixed Point Theory and Applications} 23(4) (2021) 1-10.
\bibitem{17}	P. D. Proinov, Fixed point theorems for generalized contractive mappings in metric spaces, \textit{Journal of Fixed Point Theory and Applications} 22(1) (2020) 1-27.
\bibitem{18}  P. Rajan, M. A. Navascues and A. K. Bedabrata Chand, Iterated functions systems composed of generalized $\theta$-contractions, \textit{Fractal and Fractional} 5(3) (2021) 69.
\bibitem{19}	N. A. Secelean, Weak $F$-contractions and some fixed point results, \textit{Bulletin of the Iranian Mathematical Society} 42(3) (2016) 779-798.
\bibitem{20}	N. A. Secelean, Countable iterated function systems, \textit{LAP Lambert Academic Publishing} (2013).
\bibitem{21}	N. A. Secelean, S. Mathew and D. Wardowski, New fixed point results in quasi-metric spaces and applications in fractals theory, \textit{Advances in Difference Equations} 2019(1) (2019) 1-23.
\bibitem{22}	N. A. Secelean, Iterated function systems consisting of $F$-contractions, \textit{Fixed Point Theory and Applications} 2013(1) (2013) 1-13.
\bibitem{23}	S. Xu, W. Xu and D. Zhong, Some New Iterated Function Systems Consisting of Generalized Contractive Mappings, \textit{Anal. Theory Appl.} 28(3) (2012) 269-277.

 
		
		
		
		
		
		
	\end{thebibliography}
\end{document}